\DeclareMathOperator{\supp}{supp}
\newtheorem{definition}{Definition}
\newtheorem{proposition}{Proposition}
\newtheorem{lemma}{Lemma}
\newtheorem{theorem}{Theorem}
\newtheorem{corollary}{Corollary}
\begin{document}

\title{Controlling conditional expectations by zero-determinant strategies}


\author{Masahiko Ueda}


\institute{Masahiko Ueda \at
              Graduate School of Sciences and Technology for Innovation, Yamaguchi University, Yamaguchi 753-8511, Japan \\
              \email{m.ueda@yamaguchi-u.ac.jp}           
}

\date{Received: date / Accepted: date}

\maketitle

\begin{abstract}
Zero-determinant strategies are memory-one strategies in repeated games which unilaterally enforce linear relations between expected payoffs of players.
Recently, the concept of zero-determinant strategies was extended to the class of memory-$n$ strategies with $n\geq 1$, which enables more complicated control of payoffs by one player.
However, what we can do by memory-$n$ zero-determinant strategies is still not clear.
Here, we show that memory-$n$ zero-determinant strategies in repeated games can be used to control conditional expectations of payoffs.
Equivalently, they can be used to control expected payoffs in biased ensembles, where a history of action profiles with large value of bias function is more weighted.
Controlling conditional expectations of payoffs is useful for strengthening zero-determinant strategies, because players can choose conditions in such a way that only unfavorable action profiles to one player are contained in the conditions. 
We provide several examples of memory-$n$ zero-determinant strategies in the repeated prisoner's dilemma game.
We also explain that a deformed version of zero-determinant strategies is easily extended to the memory-$n$ case.
\keywords{Repeated games \and Zero-determinant strategies \and Memory-$n$ strategies}
\end{abstract}

\section{Introduction}
\label{sec:intro}
Repeated games have succeeded in explaining cooperative behavior in the prisoner's dilemma situation, where defection is more favorable than cooperation \cite{FudTir1991,OsbRub1994}.
Recently, finite-memory strategies (strategies with finite recall) in repeated games have attracted much attention in game theory, because the rationality of real agents is bounded \cite{Rub1998}.
In computer science, agents with bounded rationality were modeled by finite automata, and equilibria of such agents have been investigated \cite{Ney1985,Rub1986,KalSta1988,NeyOka1999}.
In evolutionary biology, evolutionary stability of finite-memory strategies has been mainly focused on \cite{NowSig1992,NowSig1993,IFN2005}.
The class of memory-one strategies contains several representative strategies in the repeated prisoner's dilemma game, such as the Grim Trigger strategy \cite{Fri1971}, the Tit-for-Tat strategy \cite{RCO1965,AxeHam1981}, and the Win-Stay Lose-Shift strategy \cite{NowSig1993}.
Moreover, longer-memory strategies have recently been investigated since longer memory enables agents more complicated behavior \cite{LiKen2013,YBC2017,HMCN2017,MurBae2018,MurBae2020}.

In 2012, two physicists, William Press and Freeman Dyson, discovered a novel class of memory-one strategies, called zero-determinant (ZD) strategies, in the infinitely repeated prisoner's dilemma game \cite{PreDys2012}.
Counterintuitively, ZD strategies unilaterally control expected payoffs of players by enforcing linear relations between expected payoffs.
Since the discovery of ZD strategies, many extensions have been done, including extensions to multi-player multi-action stage games \cite{HWTN2014,PHRT2015,Guo2014,McAHau2016,HDND2016}, extensions to games with imperfect monitoring \cite{HRZ2015,MamIch2019,UedTan2020}, extensions to games with a discounting factor \cite{HTS2015,McAHau2016,IchMas2018,MamIch2020}, extensions to asynchronous games \cite{McAHau2017,You2017}, an extension to linear relations between moments of payoffs \cite{Ued2021}, and an extension to long-memory strategies \cite{Ued2021b}.
In addition, evolutionary stability of ZD strategies, such as extortionate ZD strategy and generous ZD strategy, in the repeated prisoner's dilemma game has been substantially investigated \cite{StePlo2012,HNS2013,AdaHin2013,StePlo2013,HNT2013,SzoPer2014}.
Human experiments also compared performance of extortionate ZD strategy and that of generous ZD strategy \cite{HRM2016,WZLZX2016}.
Furthermore, mathematical properties of the situation where several players take ZD strategies were investigated \cite{HDND2016,UedTan2020}.

In this paper, we provide an interpretation about the ability of memory-$n$ ZD strategies \cite{Ued2021b}.
Although memory-$n$ ZD strategies were originally introduced as strategies which unilaterally enforce linear relations between correlation functions of payoffs, we here elucidate that the fundamental ability of memory-$n$ ZD strategies is that they unilaterally enforce linear relations between conditional expectations of payoffs.
Equivalently, we can rephrase that memory-$n$ ZD strategies unilaterally enforce linear relations between expected payoffs in biased ensembles \cite{BecSch1993,LAW2005,GKP2006,GJLet2007,JacSol2010,UedSas2015,NyaTou2017}.
The results in Ref. \cite{Ued2021b} can be derived from this interpretation.
We also provide examples of memory-$n$ ZD strategies in the repeated prisoner's dilemma game.
Since expected payoffs conditional on previous action profiles are used in linear relations, players can choose conditions in such a way that only action profiles unfavorable to one player are contained in the conditions, which may result in strengthening original memory-one ZD strategies.
Furthermore, we show that extension of deformed ZD strategies \cite{Ued2021} to the memory-$n$ case is straightforward.

This paper is organized as follows.
In section \ref{sec:model}, we introduce a model of repeated games.
In section \ref{sec:previous}, we review ZD strategies.
In section \ref{sec:trigger}, we show that there exist strategies which unilaterally enforce probability zero to specific action profiles.
In section \ref{sec:conditional}, we introduce the concept of biased memory-$n$ ZD strategies, and show that they unilaterally enforce linear relations between expected payoffs in biased ensembles or probability zero for a set of action profiles.
In this section, we also discuss that the factorable memory-$n$ ZD strategies in Ref. \cite{Ued2021b} can be derived from biased memory-$n$ ZD strategies.
In section \ref{sec:examples}, we provide examples of biased memory-$n$ ZD strategies in the repeated prisoner's dilemma game.
In section \ref{sec:mnDZDS}, we introduce memory-$n$ version of deformed ZD strategies and provide several examples.
Section \ref{sec:conclusion} is devoted to concluding remarks.

\section{Model}
\label{sec:model}
We consider a repeated game with $N$ players.
The set of players is described as $\mathcal{N}:=\left\{ 1, \cdots, N \right\}$.
The action of player $a\in \mathcal{N}$ in a one-shot game is written as $\sigma_a \in A_a := \left\{ 1, \cdots, M_a \right\}$, where $M_a<\infty$ is the number of action of player $a$.
We define $\mathcal{A}:=\prod_{a=1}^N A_a$.
We collectively write $\bm{\sigma}:=\left( \sigma_1, \cdots,  \sigma_N \right)\in \mathcal{A}$ and call $\bm{\sigma}$ an action profile.
The payoff of player $a$ when the action profile is $\bm{\sigma}$ is described as $s_a\left( \bm{\sigma} \right)$.
We also write a probability $M$-simplex by $\Delta_M$.
We consider the situation that the game is repeated infinitely.
We write an action of player $a$ in $t$-th round $(t\geq 1)$ by $\sigma_a(t)$.
The (behavior) strategy of player $a$ is described as $\mathcal{T}_a := \left\{ T^{(t)}_a \right\}_{t=1}^\infty$, where $T^{(t)}_a: \mathcal{A}^{t-1} \to \Delta_{M_a}$ is the conditional probability at $t$-th round.
We write the expectation of the quantity $B$ with respect to strategies of all players by $\mathbb{E}[B]$.
We introduce a discounting factor by $\delta$, which satisfies $0\leq \delta \leq 1$.
The payoff of player $a$ in the repeated game is defined by
\begin{eqnarray}
 \mathcal{S}_a &:=& (1-\delta) \mathbb{E} \left[ \sum_{t=1}^\infty \delta^{t-1} s_a\left( \bm{\sigma}(t) \right) \right]
\end{eqnarray}
for $0\leq \delta < 1$, and
\begin{eqnarray}
 \mathcal{S}_a &:=& \mathbb{E} \left[ \lim_{T\rightarrow \infty} \frac{1}{T} \sum_{t=1}^T s_a\left( \bm{\sigma}(t) \right) \right]
\end{eqnarray}
for $\delta=1$.
In this paper, we consider only the case $\delta=1$.
Below we write $\sum_{\bm{\sigma}\in \mathcal{A}}$ and $\sum_{\sigma_a \in A_a} (\forall a)$ as $\sum_{\bm{\sigma}}$ and $\sum_{\sigma_a}$, respectively.

The payoff is rewritten as
\begin{eqnarray}
 \mathcal{S}_a &=& \lim_{T\rightarrow \infty} \frac{1}{T} \sum_{t=1}^T \sum_{\bm{\sigma}(t)} \cdots \sum_{\bm{\sigma}(1)} s_a\left( \bm{\sigma}(t) \right) \mathbb{P}_t\left( \bm{\sigma}(t), \cdots, \bm{\sigma}(1) \right),
\end{eqnarray}
where we have defined the joint probability distribution of action profiles $\left\{ \bm{\sigma}(t^\prime) \right\}_{t^\prime=1}^{t}$
\begin{eqnarray}
 \mathbb{P}_t\left( \bm{\sigma}(t), \cdots, \bm{\sigma}(1) \right) &:=& \prod_{t^\prime=1}^t \prod_{a=1}^N T^{(t^\prime)}_a \left( \sigma_a(t^\prime) | \bm{\sigma}(t^\prime-1), \cdots, \bm{\sigma}(1) \right). \nonumber \\
 &&
\end{eqnarray}
It should be noted that $\mathbb{P}_t$ satisfies the recursion relation
\begin{eqnarray}
 \mathbb{P}_{t+1}\left( \bm{\sigma}(t+1), \cdots, \bm{\sigma}(1) \right) &=& \left( \prod_{a=1}^N T^{(t+1)}_a \left( \sigma_a(t+1) | \bm{\sigma}(t), \cdots, \bm{\sigma}(1) \right) \right) \mathbb{P}_t\left( \bm{\sigma}(t), \cdots, \bm{\sigma}(1) \right). \nonumber \\
 && 
 \label{eq:recursion}
\end{eqnarray}

We first introduce (time-independent) memory-$n$ strategies $(n\geq 0)$.
\begin{definition}
\label{def:TImn}
A strategy of player $a$ is a \emph{(time-independent) memory-$n$ strategy} $(n\geq 0)$ when it is written in the form
\begin{eqnarray}
 T_a^{(t)}\left( \sigma_a(t) | \bm{\sigma}(t-1), \cdots, \bm{\sigma}(1) \right) &=& T_a\left( \sigma_a(t) | \bm{\sigma}(t-1), \cdots, \bm{\sigma}(t-n) \right) \nonumber \\
 && \quad  \left( \forall \sigma_a(t), \forall \left\{ \bm{\sigma}(t^\prime) \right\}_{t^\prime=1}^{t-1} \right)
\end{eqnarray}
for all $t> n$ with some common conditional probability $T_a$.
\end{definition}
Throughout this paper, we consider the situation that a player $\exists a\in \mathcal{N}$ uses a memory-$n$ strategy.
We remark that strategies of players $-a:=\mathcal{N}\backslash \{ a \}$ are arbitrary.
We define $\bm{\sigma}_{-a}:=\bm{\sigma}\backslash\sigma_a$.
For $t\geq n$, we also define the marginal probability distribution of the last $n$ action profiles obtained from $\mathbb{P}_t$ by
\begin{eqnarray}
 P_t\left( \bm{\sigma}(t), \cdots, \bm{\sigma}(t-n+1) \right) &:=& \sum_{\bm{\sigma}(t-n)} \cdots \sum_{\bm{\sigma}(1)} \mathbb{P}_t\left( \bm{\sigma}(t), \cdots, \bm{\sigma}(1) \right).
\end{eqnarray}
By taking summation of the both sides of Eq. (\ref{eq:recursion}) with respect to $\bm{\sigma}_{-a}(t+1)$, $\bm{\sigma}(t)$, $\cdots$, and $\bm{\sigma}(1)$ for $t\geq n$, the left-hand-side becomes
\begin{eqnarray}
&& \sum_{\bm{\sigma}_{-a}(t+1)\in\prod_{a^\prime \neq a}A_{a^\prime}} \sum_{\bm{\sigma}(t)} \cdots \sum_{\bm{\sigma}(1)} \mathbb{P}_{t+1}\left( \bm{\sigma}(t+1), \cdots, \bm{\sigma}(1) \right) \nonumber \\
&=& \sum_{\bm{\sigma}_{-a}(t+1)\in\prod_{a^\prime \neq a}A_{a^\prime}} \sum_{\bm{\sigma}(t)} \cdots \sum_{\bm{\sigma}(t-n+2)} P_{t+1}\left( \bm{\sigma}(t+1), \cdots, \bm{\sigma}(t-n+2) \right) \nonumber \\
&=& \sum_{\bm{\sigma}^{(-1)}} \cdots \sum_{\bm{\sigma}^{(-n)}} \delta_{\sigma_a^{(-1)}, \sigma_a(t+1)} P_{t+1} \left( \bm{\sigma}^{(-1)}, \cdots, \bm{\sigma}^{(-n)} \right),
\end{eqnarray}
where $\delta_{\sigma, \sigma^\prime}$ represents the Kronecker delta, which takes $1$ for $\sigma=\sigma^\prime$ and $0$ otherwise.
(The last line is obtained by renaming variables.)
The right-hand-side becomes
\begin{eqnarray}
&& \sum_{\bm{\sigma}_{-a}(t+1)\in\prod_{a^\prime \neq a}A_{a^\prime}} \sum_{\bm{\sigma}(t)} \cdots \sum_{\bm{\sigma}(1)} \left( \prod_{b=1}^N T^{(t+1)}_b \left( \sigma_b(t+1) | \bm{\sigma}(t), \cdots, \bm{\sigma}(1) \right) \right) \mathbb{P}_t\left( \bm{\sigma}(t), \cdots, \bm{\sigma}(1) \right) \nonumber \\
&=& \sum_{\bm{\sigma}(t)} \cdots \sum_{\bm{\sigma}(1)} T_a \left( \sigma_a(t+1) | \bm{\sigma}(t), \cdots, \bm{\sigma}(t-n+1) \right) \mathbb{P}_t\left( \bm{\sigma}(t), \cdots, \bm{\sigma}(1) \right) \nonumber \\
&=& \sum_{\bm{\sigma}(t)} \cdots \sum_{\bm{\sigma}(t-n+1)} T_a \left( \sigma_a(t+1) | \bm{\sigma}(t), \cdots, \bm{\sigma}(t-n+1) \right) P_t\left( \bm{\sigma}(t), \cdots, \bm{\sigma}(t-n+1) \right) \nonumber \\
&=& \sum_{\bm{\sigma}^{(-1)}} \cdots \sum_{\bm{\sigma}^{(-n)}} T_a\left( \sigma_a(t+1) | \bm{\sigma}^{(-1)}, \cdots, \bm{\sigma}^{(-n)} \right) P_t \left( \bm{\sigma}^{(-1)}, \cdots, \bm{\sigma}^{(-n)} \right).
\end{eqnarray}
By renaming $\sigma_a(t+1)\rightarrow \sigma_a$, we obtain
\begin{eqnarray}
 0 &=& \sum_{\bm{\sigma}^{(-1)}} \cdots \sum_{\bm{\sigma}^{(-n)}} T_a\left( \sigma_a | \bm{\sigma}^{(-1)}, \cdots, \bm{\sigma}^{(-n)} \right) P_t \left( \bm{\sigma}^{(-1)}, \cdots, \bm{\sigma}^{(-n)} \right) \nonumber \\
 && - \sum_{\bm{\sigma}^{(-1)}} \cdots \sum_{\bm{\sigma}^{(-n)}} \delta_{\sigma_a^{(-1)}, \sigma_a} P_{t+1} \left( \bm{\sigma}^{(-1)}, \cdots, \bm{\sigma}^{(-n)} \right) 
\end{eqnarray}
for $t\geq n$.
Then, by calculating $\lim_{T\rightarrow \infty} \frac{1}{T} \sum_{t=n}^{T+n-1}$ of both sides, we finally obtain
\begin{eqnarray}
 0 &=& \sum_{\bm{\sigma}^{(-1)}} \cdots \sum_{\bm{\sigma}^{(-n)}} \left[ T_a\left( \sigma_a | \bm{\sigma}^{(-1)}, \cdots, \bm{\sigma}^{(-n)} \right) - \delta_{\sigma_a^{(-1)}, \sigma_a} \right] P^* \left( \bm{\sigma}^{(-1)}, \cdots, \bm{\sigma}^{(-n)} \right), \nonumber \\
 &&
\end{eqnarray}
where we have introduced the limit distribution
\begin{eqnarray}
 P^* \left( \bm{\sigma}^{(-1)}, \cdots, \bm{\sigma}^{(-n)} \right) &:=& \lim_{T\rightarrow \infty} \frac{1}{T} \sum_{t=n}^{T+n-1} P_t \left( \bm{\sigma}^{(-1)}, \cdots, \bm{\sigma}^{(-n)} \right).
\end{eqnarray}
Therefore, we obtain the generalized version of Akin's lemma \cite{Aki2012,Ued2021b}:
\begin{lemma}
\label{lemma:Akin}
For the quantity
\begin{eqnarray}
 \hat{T}_a\left( \sigma_a | \bm{\sigma}^{(-1)}, \cdots, \bm{\sigma}^{(-n)} \right) &:=& T_a\left( \sigma_a | \bm{\sigma}^{(-1)}, \cdots, \bm{\sigma}^{(-n)} \right) -  \delta_{\sigma_a, \sigma^{(-1)}_a},
 \label{eq:PD}
\end{eqnarray}
the relation
\begin{eqnarray}
 0 &=& \sum_{\bm{\sigma}^{(-1)}} \cdots \sum_{\bm{\sigma}^{(-n)}} \hat{T}_a\left( \sigma_a | \bm{\sigma}^{(-1)}, \cdots, \bm{\sigma}^{(-n)} \right) P^{*} \left( \bm{\sigma}^{(-1)}, \cdots, \bm{\sigma}^{(-n)} \right)
 \label{eq:gAkin}
\end{eqnarray}
holds for arbitrary $\sigma_a$.
\end{lemma}
In other words, player $a$ unilaterally enforces linear relations between values of the limit probability distribution $P^{*}$ regardless of the strategies of other players.
Memory-one examples of such linear relations in the repeated prisoner's game is provided in Appendix \ref{app:memory-one}.
The quantity (\ref{eq:PD}) is called a Press-Dyson tensor (or a strategy tensor) \cite{Ued2021b}.

It should be noted that a Press-Dyson tensor $\hat{T}_a$ is solely controlled by player $a$.
Due to properties of a probability distribution $T_a$, a Press-Dyson tensor satisfies several relations.
First, it satisfies
\begin{eqnarray}
 \sum_{\sigma_a} \hat{T}_a \left( \sigma_a | \bm{\sigma}^{(-1)}, \cdots, \bm{\sigma}^{(-n)} \right) &=& 0
 \label{eq:PD_normalized}
\end{eqnarray}
for arbitrary $\left( \bm{\sigma}^{(-1)}, \cdots, \bm{\sigma}^{(-n)} \right)$ due to the normalization condition of $T_a$.
This implies that the number of linear relations (\ref{eq:gAkin}) enforced by player $a$ is at most $(M_a-1)$.
Second, it satisfies
\begin{eqnarray}
 \hat{T}_a \left( \sigma_a | \bm{\sigma}^{(-1)}, \cdots, \bm{\sigma}^{(-n)} \right) && \left\{
  \begin{array}{ll}
    \leq 0, & \left(\sigma_a = \sigma^{(-1)}_a \right) \\
    \geq 0, & \left(\sigma_a \neq \sigma^{(-1)}_a \right)
  \end{array}
  \right.
  \label{eq:property_strategy}
\end{eqnarray}
for all $\sigma_a$, $\bm{\sigma}^{(-1)}$, $\cdots$, $\bm{\sigma}^{(-n)}$.
Third, it satisfies
\begin{eqnarray}
 \left| \hat{T}_a \left( \sigma_a | \bm{\sigma}^{(-1)}, \cdots, \bm{\sigma}^{(-n)} \right) \right| &\leq& 1
 \label{eq:condition_strategy}
\end{eqnarray}
for all $\sigma_a$, $\bm{\sigma}^{(-1)}$, $\cdots$, $\bm{\sigma}^{(-n)}$.
The last two comes from the fact that $T_a$ takes value in $[0,1]$.

Below we write the expectation for the limit distribution $P^{*} \left( \bm{\sigma}^{(-1)}, \cdots, \bm{\sigma}^{(-n)} \right)$ by $\left\langle \cdots \right\rangle^{*}$, and note $s_0\left( \bm{\sigma} \right):=1$ $(\forall \bm{\sigma})$ for simplicity.
We remark that the payoff of player $\forall a^\prime \in \mathcal{N}$ is described as
\begin{eqnarray}
 \mathcal{S}_{a^\prime} &=& \lim_{T\rightarrow \infty} \frac{1}{T} \left( \sum_{t=1}^{n-1} \sum_{\bm{\sigma}(t)} \cdots \sum_{\bm{\sigma}(1)} s_{a^\prime}\left( \bm{\sigma}(t) \right) \mathbb{P}_t\left( \bm{\sigma}(t), \cdots, \bm{\sigma}(1) \right) \right. \nonumber \\
 && \left. + \sum_{t=n}^{T} \sum_{\bm{\sigma}(t)} \cdots \sum_{\bm{\sigma}(t-n+1)} s_{a^\prime}\left( \bm{\sigma}(t) \right) P_t\left( \bm{\sigma}(t), \cdots, \bm{\sigma}(t-n+1) \right)  \right) \nonumber \\
 &=& \sum_{\bm{\sigma}^{(-1)}} \cdots \sum_{\bm{\sigma}^{(-n)}} s_{a^\prime}\left( \bm{\sigma}^{(-1)} \right) P^*\left( \bm{\sigma}^{(-1)}, \cdots, \bm{\sigma}^{(-n)} \right) \nonumber \\
 &=& \left\langle s_{a^\prime} \left( \bm{\sigma}^{(-1)} \right) \right\rangle^{*}.
\end{eqnarray}
That is, the payoffs in the repeated game are calculated as expected payoffs in the limit distribution.
In the proof of Lemma \ref{lemma:Akin}, we have assumed that $P^*$ exists.
When $P^*$ does not exist, the payoffs in the repeated games cannot be defined.
Therefore, we consider only the case that $P^*$ exists.

\section{Previous studies}
\label{sec:previous}
Press and Dyson introduced the concept of zero-determinant strategies in repeated games \cite{PreDys2012}:
\begin{definition}
\label{def:moZDS}
A memory-one strategy of player $a$ is a \emph{zero-determinant (ZD) strategy} when its Press-Dyson vectors $\hat{T}_a$ can be written in the form
\begin{eqnarray}
 \sum_{\sigma_a} c_{\sigma_a} \hat{T}_a\left( \sigma_a | \bm{\sigma}^{(-1)} \right) &=& \sum_{b=0}^N \alpha_{b} s_{b} \left( \bm{\sigma}^{(-1)} \right) \quad (\forall \bm{\sigma}^{(-1)})
 \label{eq:moZDS}
\end{eqnarray}
with some nontrivial coefficients $\left\{ c_{\sigma_a} \right\}$ and $\left\{ \alpha_{b} \right\}$ (that is, not $c_{1}=\cdots=c_{M_a}=\mathrm{const.}$ and not $\alpha_0=\alpha_1=\cdots=\alpha_N=0$).
\end{definition}
(Press-Dyson tensors with $n=1$ are particularly called Press-Dyson vectors.)
Because Press-Dyson vectors satisfy Akin's lemma (Lemma \ref{lemma:Akin}), the following proposition holds:
\begin{proposition}[\cite{PreDys2012,McAHau2016}]
\label{prop:moZDS}
A ZD strategy (\ref{eq:moZDS}) unilaterally enforces a linear relation between expected payoffs:
\begin{eqnarray}
 0 &=& \sum_{b=0}^N \alpha_{b} \left\langle s_{b} \left( \bm{\sigma}^{(-1)} \right) \right\rangle^{*}.
\end{eqnarray}
\end{proposition}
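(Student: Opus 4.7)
The plan is to chain Akin's lemma with the defining linear relation of the ZD strategy in a single computation. First, I would invoke Lemma \ref{lemma:Akin} specialized to memory $n=1$: for every action $\sigma_a$ of player $a$,
\begin{equation}
0 = \sum_{\bm{\sigma}^{(-1)}} \hat{T}_a\left( \sigma_a | \bm{\sigma}^{(-1)} \right) P^{(\mathrm{st})}\left( \bm{\sigma}^{(-1)} \right).
\end{equation}
Since this identity holds for each $\sigma_a$ separately, multiplying by the coefficient $c_{\sigma_a}$ from Definition \ref{def:moZDS} and summing over $\sigma_a$ preserves the zero on the left-hand side.

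Next, I would swap the order of the finite sums so that the sum over $\sigma_a$ sits inside the sum over $\bm{\sigma}^{(-1)}$, and substitute the ZD condition (\ref{eq:moZDS}), which rewrites $\sum_{\sigma_a} c_{\sigma_a} \hat{T}_a\left( \sigma_a | \bm{\sigma}^{(-1)} \right)$ as $\sum_{b=0}^N \alpha_b s_b\left( \bm{\sigma}^{(-1)} \right)$. This yields
\begin{equation}
0 = \sum_{\bm{\sigma}^{(-1)}} \left[ \sum_{b=0}^N \alpha_b s_b\left( \bm{\sigma}^{(-1)} \right) \right] P^{(\mathrm{st})}\left( \bm{\sigma}^{(-1)} \right),
\end{equation}
after which I pull the coefficients $\alpha_b$ outside and recognize each remaining inner sum as the stationary expectation $\left\langle s_b\left( \bm{\sigma}^{(-1)} \right) \right\rangle^{(\mathrm{st})}$. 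The convention $s_0(\bm{\sigma}) := 1$ introduced at the end of Section \ref{sec:model} absorbs the affine term $\alpha_0$ into the same linear form, giving exactly the relation claimed.

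There is essentially no obstacle here: the claim is a one-line algebraic consequence of Lemma \ref{lemma:Akin} together with Definition \ref{def:moZDS}. The only points requiring a moment's care are that the finite sums over $\sigma_a$ and over $\bm{\sigma}^{(-1)}$ may be interchanged (immediate, since $A_a$ and $\prod_b A_b$ are finite), and that the stationary distribution used to form $\left\langle \cdot \right\rangle^{(\mathrm{st})}$ is the same $P^{(\mathrm{st})}$ appearing in Akin's lemma, which is true by construction in Section \ref{sec:model}. The conceptual content of the proposition lies entirely in the ZD definition, which has been rigged precisely so that a linear combination of player $a$'s Press-Dyson vector components reproduces an affine function of the payoff vector.
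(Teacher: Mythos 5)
Your proof is correct and is exactly the argument the paper intends (the paper states the proposition as an immediate consequence of Lemma \ref{lemma:Akin} applied to the linear combination $\sum_{\sigma_a} c_{\sigma_a}\hat{T}_a$ and does not spell out the details). The sum interchange and the role of $s_0(\bm{\sigma}):=1$ are handled correctly, so nothing is missing.
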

That is, the expected payoffs can be unilaterally controlled by one ZD player.

Recently, a deformed version of ZD strategies was also introduced \cite{Ued2021}:
\begin{definition}
\label{def:deformedZDS}
A memory-one strategy of player $a$ is a \emph{deformed ZD strategy} when its Press-Dyson vectors $\hat{T}_a$ can be written in the form
\begin{eqnarray}
 \sum_{\sigma_a} c_{\sigma_a} \hat{T}_a\left( \sigma_a | \bm{\sigma}^{(-1)} \right) &=& \sum_{k_1=0}^\infty \cdots \sum_{k_N=0}^\infty \alpha_{k_1, \cdots, k_N} \prod_{b=1}^N s_{b} \left( \bm{\sigma}^{(-1)} \right)^{k_b} \quad \left( \forall \bm{\sigma}^{(-1)} \right) \nonumber \\
 &&
 \label{eq:deformedZDS}
\end{eqnarray}
with some nontrivial coefficients $\left\{ c_{\sigma_a} \right\}$ and $\left\{ \alpha_{k_1, \cdots, k_N} \right\}$.
\end{definition}
Due to the same reason as Proposition \ref{prop:moZDS}, the following proposition holds:
\begin{proposition}[\cite{Ued2021}]
\label{prop:deformedZDS}
A deformed ZD strategy (\ref{eq:deformedZDS}) unilaterally enforces a linear relation between moments of payoffs:
\begin{eqnarray}
 0 &=& \sum_{k_1=0}^\infty \cdots \sum_{k_N=0}^\infty \alpha_{k_1, \cdots, k_N} \left\langle \prod_{b=1}^N s_{b} \left( \bm{\sigma}^{(-1)} \right)^{k_b} \right\rangle^{*}.
\end{eqnarray}
\end{proposition}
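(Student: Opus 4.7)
The plan is to follow the template of Proposition \ref{prop:moZDS} almost verbatim, since the only change from Definition \ref{def:moZDS} to Definition \ref{def:deformedZDS} is the form of the right-hand side of the defining equation, while the left-hand side is still a linear combination of Press-Dyson vectors weighted by constants $\{ c_{\sigma_a} \}$. Concretely, I would multiply both sides of equation (\ref{eq:deformedZDS}) by the stationary distribution $P^{(\mathrm{st})}\left( \bm{\sigma}^{(-1)} \right)$ and sum over $\bm{\sigma}^{(-1)}$.

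On the left-hand side, interchanging the finite sum over $\sigma_a$ with the sum over $\bm{\sigma}^{(-1)}$ produces
\begin{equation*}
\sum_{\sigma_a} c_{\sigma_a} \sum_{\bm{\sigma}^{(-1)}} \hat{T}_a\left( \sigma_a | \bm{\sigma}^{(-1)} \right) P^{(\mathrm{st})}\left( \bm{\sigma}^{(-1)} \right),
\end{equation*}
and invoking Lemma \ref{lemma:Akin} in the $n=1$ case makes the inner sum vanish for every choice of $\sigma_a$, so the whole expression equals zero. On the right-hand side, the same averaging converts each monomial $\prod_{b=1}^N s_{b}\left( \bm{\sigma}^{(-1)} \right)^{k_b}$ into its stationary expectation, which after exchanging with the outer multi-index summation yields exactly the linear relation in the statement.

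No substantive obstacle arises, because Akin's lemma is the only non-trivial ingredient and it has already been established as Lemma \ref{lemma:Akin}. The one point that deserves a brief comment is the interchange of the infinite sum over $(k_1, \ldots, k_N)$ with the expectation against $P^{(\mathrm{st})}$; this is automatic if the series in equation (\ref{eq:deformedZDS}) has only finitely many nonzero coefficients, and more generally can be justified by the boundedness of payoffs in a finite-action game together with an implicit absolute-convergence assumption built into the definition of a deformed ZD strategy.
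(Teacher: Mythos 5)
Your proposal is correct and matches the paper's intended argument: the paper simply states that Proposition \ref{prop:deformedZDS} holds ``due to the same reason as Proposition \ref{prop:moZDS},'' i.e., by averaging the defining identity against $P^{(\mathrm{st})}$ and applying Lemma \ref{lemma:Akin} to annihilate the left-hand side. Your extra remark on justifying the interchange of the infinite multi-index sum with the expectation is a sensible addition that the paper leaves implicit.
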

That is, the moments of payoffs can also be unilaterally controlled by one ZD player.

Furthermore, Ueda extended the concept of ZD strategies to memory-$n$ strategies \cite{Ued2021b}:
\begin{definition}
\label{def:mnZDS}
A memory-$n$ strategy of player $a$ is a \emph{memory-$n$ ZD strategy} when its Press-Dyson tensors $\hat{T}_a$ can be written in the form
\begin{eqnarray}
 \sum_{\sigma_a} c_{\sigma_a} \hat{T}_a \left( \sigma_a | \bm{\sigma}^{(-1)}, \cdots, \bm{\sigma}^{(-n)} \right) &=& \sum_{b^{(-1)}=0}^N \cdots \sum_{b^{(-n)}=0}^N \alpha_{b^{(-1)},\cdots, b^{(-n)}} \prod_{m=1}^n s_{b^{(-m)}} \left( \bm{\sigma}^{(-m)} \right) \nonumber \\
 && \quad \left( \forall \left\{ \bm{\sigma}^{(-m)} \right\}_{m=1}^n \right)
 \label{eq:mnZDS}
\end{eqnarray}
with some nontrivial coefficients $\left\{ c_{\sigma_a} \right\}$ and $\left\{ \alpha_{b^{(-1)},\cdots, b^{(-n)}} \right\}$.
\end{definition}
Because of Lemma \ref{lemma:Akin}, the following proposition also holds:
\begin{proposition}[\cite{Ued2021b}]
\label{prop:mnZDS}
A memory-$n$ ZD strategy (\ref{eq:mnZDS}) unilaterally enforces a linear relation between correlation functions of payoffs:
\begin{eqnarray}
 0 &=& \sum_{b^{(-1)}=0}^N \cdots \sum_{b^{(-n)}=0}^N \alpha_{b^{(-1)},\cdots, b^{(-n)}} \left\langle \prod_{m=1}^n s_{b^{(-m)}} \left( \bm{\sigma}^{(-m)} \right) \right\rangle^{*}.
\end{eqnarray}
\end{proposition}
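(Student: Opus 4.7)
The plan is to combine Akin's lemma (Lemma \ref{lemma:Akin}) with the defining identity (\ref{eq:mnZDS}) by contracting against the coefficients $\{c_{\sigma_a}\}$. Akin's lemma asserts that for each fixed $\sigma_a$ the sum $\sum_{\bm{\sigma}^{(-1)}}\cdots\sum_{\bm{\sigma}^{(-n)}} \hat{T}_a(\sigma_a|\bm{\sigma}^{(-1)},\cdots,\bm{\sigma}^{(-n)}) P^{(\mathrm{st})}(\bm{\sigma}^{(-1)},\cdots,\bm{\sigma}^{(-n)})$ vanishes. Multiplying this identity by $c_{\sigma_a}$ and summing over $\sigma_a$ therefore also gives zero.

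Next I would swap the order of summation so that the inner sum $\sum_{\sigma_a} c_{\sigma_a}\hat{T}_a(\sigma_a|\bm{\sigma}^{(-1)},\cdots,\bm{\sigma}^{(-n)})$ is performed first. Substituting the memory-$n$ ZD condition (\ref{eq:mnZDS}) replaces this inner sum by $\sum_{b^{(-1)},\cdots,b^{(-n)}} \alpha_{b^{(-1)},\cdots,b^{(-n)}}\prod_{m=1}^n s_{b^{(-m)}}(\bm{\sigma}^{(-m)})$. Pulling the fixed coefficients $\alpha_{b^{(-1)},\cdots,b^{(-n)}}$ outside the remaining sums over $\bm{\sigma}^{(-1)},\cdots,\bm{\sigma}^{(-n)}$ and recognising the weighted sum against $P^{(\mathrm{st})}$ as the stationary expectation $\langle\cdot\rangle^{(\mathrm{st})}$, one obtains precisely
\begin{equation*}
0 = \sum_{b^{(-1)}=0}^N \cdots \sum_{b^{(-n)}=0}^N \alpha_{b^{(-1)},\cdots,b^{(-n)}} \left\langle \prod_{m=1}^n s_{b^{(-m)}}\left(\bm{\sigma}^{(-m)}\right)\right\rangle^{(\mathrm{st})},
\end{equation*}
which is the desired linear relation.

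There is essentially no obstacle here; the argument is a direct lift of the memory-one proof of Proposition \ref{prop:moZDS} (and of Proposition \ref{prop:deformedZDS}) to the tensor setting, and all interchanges of summation are legitimate because each sum runs over a finite set. The only point worth verifying is that the identity truly is unilaterally enforced, i.e.\ depends solely on player $a$'s strategy: this is automatic because $\hat{T}_a$ is a function of $T_a$ alone, and Akin's lemma above holds regardless of the co-players' strategies, so the resulting relation among correlation functions is imposed no matter what the opponents do.
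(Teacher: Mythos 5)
Your argument is correct and is exactly the route the paper intends: it states that Proposition \ref{prop:mnZDS} holds ``because of Lemma \ref{lemma:Akin}'', and the explicit proofs it does give (e.g.\ of Theorem \ref{th:biasZDS}) follow the same pattern of contracting the defining identity with $\{c_{\sigma_a}\}$, taking stationary expectations, and invoking Akin's lemma. Nothing is missing.
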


The purpose of this paper is reinterpreting memory-$n$ ZD strategies in terms of more elementary strategies.

\section{Probability-controlling strategies}
\label{sec:trigger}
We first prove that there exist memory-$n$ strategies which avoid an arbitrary action profile $\bm{\sigma}$ in the limit distribution.
We define $\delta_{\bm{\sigma}, \hat{\bm{\sigma}}}:=\prod_{a=1}^N \delta_{\sigma_a, \hat{\sigma}_a}$.

\begin{proposition}
\label{prop:trigger}
Given some action profiles $\left\{\hat{\bm{\sigma}}^{(-m)}\right\}_{m=1}^n$, memory-$n$ strategies of player $a$ of the form
\begin{eqnarray}
 \hat{T}_a \left( \sigma_a | \bm{\sigma}^{(-1)}, \cdots, \bm{\sigma}^{(-n)} \right) &=& \hat{T}_a^{(1)} \left( \sigma_a | \bm{\sigma}^{(-1)} \right) \prod_{m=1}^n \delta_{\bm{\sigma}^{(-m)}, \hat{\bm{\sigma}}^{(-m)}}, \nonumber \\
 &&  \quad \left( \forall \sigma_a,  \forall \left\{ \bm{\sigma}^{(-m)} \right\}_{m=1}^n \right)
 \label{eq:trigger}
\end{eqnarray}
where $\hat{T}_a^{(1)}$ is a Press-Dyson vector of a memory-one strategy satisfying $\hat{T}_a^{(1)} \left( \sigma_a^* | \hat{\bm{\sigma}}^{(-1)} \right) \neq 0$ for some $\sigma_a^*$, unilaterally enforce probability zero to the history $\hat{\bm{\sigma}}^{(-1)}, \cdots, \hat{\bm{\sigma}}^{(-n)}$:
\begin{eqnarray}
 P^{*} \left( \hat{\bm{\sigma}}^{(-1)}, \cdots, \hat{\bm{\sigma}}^{(-n)} \right) &=& 0.
 \label{eq:linear_trigger}
\end{eqnarray}
\end{proposition}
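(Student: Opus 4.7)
The plan is to apply Lemma \ref{lemma:Akin} (the memory-$n$ Akin's lemma) directly to the Press-Dyson tensor of the proposed form and exploit the Kronecker deltas to collapse the multiple sums to a single term. Concretely, substituting (\ref{eq:trigger}) into Lemma \ref{lemma:Akin} gives
\begin{eqnarray}
 0 &=& \sum_{\bm{\sigma}^{(-1)}}\cdots \sum_{\bm{\sigma}^{(-n)}} \hat{T}_a^{(1)}\!\left(\sigma_a | \bm{\sigma}^{(-1)}\right) \prod_{m=1}^n \delta_{\bm{\sigma}^{(-m)}, \hat{\bm{\sigma}}^{(-m)}} P^{(\mathrm{st})}\!\left(\bm{\sigma}^{(-1)},\cdots,\bm{\sigma}^{(-n)}\right) \nonumber \\
 &=& \hat{T}_a^{(1)}\!\left(\sigma_a | \hat{\bm{\sigma}}^{(-1)}\right) P^{(\mathrm{st})}\!\left(\hat{\bm{\sigma}}^{(-1)},\cdots,\hat{\bm{\sigma}}^{(-n)}\right), \nonumber
\end{eqnarray}
valid for every $\sigma_a$. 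Specializing to $\sigma_a=\sigma_a^*$ and using the hypothesis $\hat{T}_a^{(1)}(\sigma_a^* | \hat{\bm{\sigma}}^{(-1)}) \neq 0$ yields (\ref{eq:linear_trigger}) immediately.

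Before concluding, I would verify that the formula (\ref{eq:trigger}) actually corresponds to a bona fide memory-$n$ strategy, i.e.\ that the associated conditional probability $T_a = \hat{T}_a + \delta_{\sigma_a,\sigma_a^{(-1)}}$ is nonnegative and normalized for every history. When $\bm{\sigma}^{(-m)} = \hat{\bm{\sigma}}^{(-m)}$ for all $m$, the product of deltas equals one and $T_a$ reduces to the memory-one probability $T_a^{(1)}(\sigma_a | \hat{\bm{\sigma}}^{(-1)})$, which is a valid distribution by assumption on $\hat{T}_a^{(1)}$; otherwise the product vanishes and $T_a$ reduces to the pure repeat-last-action rule $\delta_{\sigma_a,\sigma_a^{(-1)}}$, which is also a valid distribution. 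The properties (\ref{eq:PD_normalized})--(\ref{eq:condition_strategy}) then follow for $\hat{T}_a$ automatically from those of $\hat{T}_a^{(1)}$.

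There is essentially no hard step; the only subtlety worth flagging is the need for the nondegeneracy hypothesis $\hat{T}_a^{(1)}(\sigma_a^*|\hat{\bm{\sigma}}^{(-1)})\neq 0$, without which the equation $\hat{T}_a^{(1)}(\sigma_a | \hat{\bm{\sigma}}^{(-1)}) P^{(\mathrm{st})}(\hat{\bm{\sigma}}^{(-1)},\cdots,\hat{\bm{\sigma}}^{(-n)}) = 0$ would be automatically satisfied and would impose no constraint on $P^{(\mathrm{st})}$. This hypothesis is easily realized in practice: for instance, choosing $\hat{T}_a^{(1)}$ to be the Press-Dyson vector of any memory-one strategy that differs from pure repetition at $\hat{\bm{\sigma}}^{(-1)}$ will do.
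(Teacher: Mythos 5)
Your proof is correct and follows essentially the same route as the paper: substitute the form (\ref{eq:trigger}) into Lemma \ref{lemma:Akin}, let the Kronecker deltas collapse the sums to the single term $\hat{T}_a^{(1)}(\sigma_a^*|\hat{\bm{\sigma}}^{(-1)})\,P^{(\mathrm{st})}(\hat{\bm{\sigma}}^{(-1)},\cdots,\hat{\bm{\sigma}}^{(-n)})$, and invoke the nondegeneracy hypothesis. Your additional check that (\ref{eq:trigger}) defines a bona fide strategy is a sensible extra that the paper omits here, but it does not change the argument.
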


\begin{proof}
We consider Eq. (\ref{eq:trigger}) with $\sigma_a=\sigma_a^*$:
\begin{eqnarray}
 \hat{T}_a \left( \sigma_a^* | \bm{\sigma}^{(-1)}, \cdots, \bm{\sigma}^{(-n)} \right) &=& \hat{T}_a^{(1)} \left( \sigma_a^* | \bm{\sigma}^{(-1)} \right) \prod_{m=1}^n \delta_{\bm{\sigma}^{(-m)}, \hat{\bm{\sigma}}^{(-m)}}.
\end{eqnarray}
By calculating expectations of the both sides with respect to the limit distribution $P^{*} \left( \bm{\sigma}^{(-1)}, \cdots, \bm{\sigma}^{(-n)} \right)$ corresponding to the strategy (\ref{eq:trigger}), and by using Lemma \ref{lemma:Akin}, we obtain
\begin{eqnarray}
 0 &=& \sum_{\bm{\sigma}^{(-1)}, \cdots, \bm{\sigma}^{(-n)}} P^{*} \left( \bm{\sigma}^{(-1)}, \cdots, \bm{\sigma}^{(-n)} \right) \hat{T}_a^{(1)} \left( \sigma_a^* | \bm{\sigma}^{(-1)} \right) \prod_{m=1}^n \delta_{\bm{\sigma}^{(-m)}, \hat{\bm{\sigma}}^{(-m)}} \nonumber \\
 &=& P^{*} \left( \hat{\bm{\sigma}}^{(-1)}, \cdots, \hat{\bm{\sigma}}^{(-n)} \right) \hat{T}_a^{(1)} \left( \sigma_a^* | \hat{\bm{\sigma}}^{(-1)} \right).
\end{eqnarray}
By the assumption $\hat{T}_a^{(1)} \left( \sigma_a^* | \hat{\bm{\sigma}}^{(-1)} \right) \neq 0$, we obtain the equation (\ref{eq:linear_trigger}).
$\Box$
\end{proof}

Strategies of the form (\ref{eq:trigger}) can be used for avoiding some unfavorable situation $\left( \hat{\bm{\sigma}}^{(-1)}, \cdots, \hat{\bm{\sigma}}^{(-n)} \right)$.
We call strategies of the form (\ref{eq:trigger}) \emph{probability-controlling strategies}.
For example, the Grim Trigger strategy of player $1$ in the repeated prisoner's dilemma game can be regarded as a memory-one probability-controlling strategy avoiding the action profile (Cooperation, Defection), as we can see in Appendix \ref{app:memory-one}.
This fact provides another explanation about the property that the Grim Trigger strategy is unbeatable \cite{DOS2012b}.
We again discuss Grim Trigger in Section \ref{sec:examples}.

\section{Biased memory-$n$ ZD strategies}
\label{sec:conditional}
The limit distribution $P^{*}\left( \bm{\sigma}^{(-1)}, \cdots, \bm{\sigma}^{(-n)} \right)$ gives the joint probability of $n$ action profiles $\left( \bm{\sigma}^{(-1)}, \cdots, \bm{\sigma}^{(-n)} \right)$.
When we consider some real function $K \left( \bm{\sigma}^{(-1)}, \cdots, \bm{\sigma}^{(-n)} \right)$ and introduce the quantity
\begin{eqnarray}
 P_{K}\left( \bm{\sigma}^{(-1)}, \cdots, \bm{\sigma}^{(-n)} \right) &:=& \frac{P^{*}\left( \bm{\sigma}^{(-1)}, \cdots, \bm{\sigma}^{(-n)} \right) e^{K\left( \bm{\sigma}^{(-1)}, \cdots, \bm{\sigma}^{(-n)} \right)}}{\left\langle e^{K \left( \bm{\sigma}^{(-1)}, \cdots, \bm{\sigma}^{(-n)} \right)} \right\rangle^{*}},
\end{eqnarray}
this quantity can also be regarded as a probability distribution of $n$ action profiles $\left( \bm{\sigma}^{(-1)}, \cdots, \bm{\sigma}^{(-n)} \right)$.
In this ensemble of histories, a history with large $K$ is more weighted.
We call such ensemble $P_K$ as \emph{biased ensemble} biased by the function $K$.
Biased ensembles recently attract much attention in statistical mechanics of trajectories \cite{BecSch1993,LAW2005,GKP2006,GJLet2007,JacSol2010,UedSas2015,NyaTou2017}.

We now prove our main theorem.
\begin{theorem}
\label{th:biasZDS}
Let $\hat{T}_a^{(1)}$ be Press-Dyson vectors of a memory-one ZD strategy of player $a$:
\begin{eqnarray}
 \sum_{\sigma_a} c_{\sigma_a} \hat{T}_a^{(1)} \left( \sigma_a | \bm{\sigma}^{(-1)} \right) &=& \sum_{b=0}^N \alpha_b s_b \left( \bm{\sigma}^{(-1)} \right) \quad \left( \forall \bm{\sigma}^{(-1)} \right)
 \label{eq:moZDS_mod}
\end{eqnarray}
with some coefficients $\left\{ c_{\sigma_a}  \right\}$ and $\left\{ \alpha_b \right\}$.
Let $K: \mathcal{A}^n \to \mathbb{R}\cup \{ -\infty \}$ be a function satisfying $K \left( \cdot \right) < \infty$, and define 
\begin{eqnarray}
 K_{\mathrm{max}} &:=& \max_{\bm{\sigma}^{(-1)}, \cdots, \bm{\sigma}^{(-n)}} K\left( \bm{\sigma}^{(-1)}, \cdots, \bm{\sigma}^{(-n)} \right).
\end{eqnarray}
Then, a memory-$n$ strategy
\begin{eqnarray}
 \hat{T}_a \left( \sigma_a | \bm{\sigma}^{(-1)}, \cdots, \bm{\sigma}^{(-n)} \right) &=& \hat{T}_a^{(1)} \left( \sigma_a | \bm{\sigma}^{(-1)} \right) e^{K \left( \bm{\sigma}^{(-1)}, \cdots, \bm{\sigma}^{(-n)} \right) - K_{\mathrm{max}}} \nonumber \\
 && \quad \left( \forall \sigma_a,  \forall \left\{ \bm{\sigma}^{(-m)} \right\}_{m=1}^n \right)
 \label{eq:mnZDS_bias}
\end{eqnarray}
unilaterally enforces either a linear relation between expected payoffs in a biased ensemble (biased by the function $K$)
\begin{eqnarray}
 0 &=& \sum_{b=0}^N \alpha_b \frac{\left\langle s_b\left( \bm{\sigma}^{(-1)} \right)e^{K \left( \bm{\sigma}^{(-1)}, \cdots, \bm{\sigma}^{(-n)} \right)} \right\rangle^{*}}{\left\langle e^{K \left( \bm{\sigma}^{(-1)}, \cdots, \bm{\sigma}^{(-n)} \right)} \right\rangle^{*}}
 \label{eq:linear_biased}
\end{eqnarray}
or the relation
\begin{eqnarray}
 0 &=& P^{*} \left( \bm{\sigma}^{(-1)}, \cdots, \bm{\sigma}^{(-n)} \right) \quad \left( \forall \left( \bm{\sigma}^{(-1)}, \cdots, \bm{\sigma}^{(-n)} \right) \in \supp e^{K(\cdot)} \right),
 \label{eq:linear_biased_trivial}
\end{eqnarray}
where $\supp f$ represents the support of function $f$.
\end{theorem}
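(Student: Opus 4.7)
The plan is to reduce Theorem~\ref{th:biasZDS} to Lemma~\ref{lemma:Akin} by viewing the multiplier $e^{K-K_{\max}}$ as a nonnegative rescaling that preserves all the structural constraints on a Press-Dyson tensor. First I will verify that the right-hand side of (\ref{eq:mnZDS_bias}) really is the Press-Dyson tensor of some genuine memory-$n$ strategy $T_a$. Because $K-K_{\max}\leq 0$, the factor $e^{K-K_{\max}}$ lies in $[0,1]$. Combined with $|\hat{T}_a^{(1)}|\leq 1$ and $\sum_{\sigma_a}\hat{T}_a^{(1)}=0$, this immediately yields the three required properties (\ref{eq:PD_normalized}), (\ref{eq:property_strategy}), (\ref{eq:condition_strategy}) for $\hat{T}_a$: normalization from linearity of the sum in $\sigma_a$; the sign condition, because $e^{K-K_{\max}}\geq 0$ and the sign of $\hat{T}_a^{(1)}\left(\sigma_a|\bm{\sigma}^{(-1)}\right)$ depends only on whether $\sigma_a=\sigma_a^{(-1)}$; and the magnitude bound, as the product of two quantities each lying in $[-1,1]$. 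Adding $\delta_{\sigma_a,\sigma_a^{(-1)}}$ then recovers a legitimate conditional probability $T_a\in[0,1]$.

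Next I multiply (\ref{eq:mnZDS_bias}) by $c_{\sigma_a}$, sum over $\sigma_a$, and apply the ZD identity (\ref{eq:moZDS_mod}) pointwise in $\bm{\sigma}^{(-1)}$. Since $e^{K-K_{\max}}$ does not depend on $\sigma_a$, it factors out, giving
\begin{equation*}
\sum_{\sigma_a} c_{\sigma_a} \hat{T}_a\left(\sigma_a | \bm{\sigma}^{(-1)},\cdots,\bm{\sigma}^{(-n)}\right) = e^{K\left(\bm{\sigma}^{(-1)},\cdots,\bm{\sigma}^{(-n)}\right) - K_{\max}} \sum_{b=0}^N \alpha_b s_b\!\left(\bm{\sigma}^{(-1)}\right).
\end{equation*}
Taking the stationary expectation and applying Lemma~\ref{lemma:Akin} term-by-term on the left makes that side vanish, while on the right the constant $e^{-K_{\max}}\neq 0$ factors out, leaving $\sum_{b=0}^N \alpha_b \left\langle s_b\!\left(\bm{\sigma}^{(-1)}\right) e^{K\left(\bm{\sigma}^{(-1)},\cdots,\bm{\sigma}^{(-n)}\right)}\right\rangle^{(\mathrm{st})} = 0$.

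To conclude I would divide by $\left\langle e^{K\left(\bm{\sigma}^{(-1)},\cdots,\bm{\sigma}^{(-n)}\right)}\right\rangle^{(\mathrm{st})}$ to produce (\ref{eq:linear_biased}); the only subtlety is what to do when this denominator vanishes. Because $e^{K\left(\bm{\sigma}^{(-1)},\cdots,\bm{\sigma}^{(-n)}\right)}$ is nonnegative and strictly positive on $\supp e^{K}$, a zero expectation forces $P^{(\mathrm{st})}\left(\bm{\sigma}^{(-1)},\cdots,\bm{\sigma}^{(-n)}\right)=0$ for every tuple in $\supp e^{K}$, which is precisely the alternative (\ref{eq:linear_biased_trivial}). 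The step requiring the most care, rather than posing any genuine obstacle, is the validity check above: subtracting $K_{\max}$ rather than leaving $e^{K}$ unscaled is exactly what preserves the bound $|\hat{T}_a|\leq 1$, and without this the construction would not correspond to any legitimate strategy.
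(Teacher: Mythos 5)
Your proposal is correct and follows essentially the same route as the paper's own proof: verify that $e^{K-K_{\max}}\in[0,1]$ preserves the Press-Dyson conditions, factor the bias out of the ZD identity, apply Lemma~\ref{lemma:Akin}, and split into the two cases according to whether $\left\langle e^{K}\right\rangle^{(\mathrm{st})}$ vanishes, using nonnegativity of $e^{K}$ to derive the degenerate alternative. The only difference is that you spell out the validity check that the paper dismisses with ``we can easily check,'' which is a reasonable addition.
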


\begin{proof}
First, we check that tensors (\ref{eq:mnZDS_bias}) indeed satisfy the conditions of strategies, that is, Eqs. (\ref{eq:PD_normalized}), (\ref{eq:property_strategy}), and (\ref{eq:condition_strategy}).
Due to the equality
\begin{eqnarray}
 \sum_{\sigma_a}\hat{T}_a^{(1)} \left( \sigma_a | \bm{\sigma}^{(-1)} \right) &=& 0 \quad \left( \forall \bm{\sigma}^{(-1)} \right)
\end{eqnarray}
for Press-Dyson vectors of memory-one ZD strategies, we obtain
\begin{eqnarray}
 \sum_{\sigma_a} \hat{T}_a \left( \sigma_a | \bm{\sigma}^{(-1)}, \cdots, \bm{\sigma}^{(-n)} \right) &=& \sum_{\sigma_a} \hat{T}_a^{(1)} \left( \sigma_a | \bm{\sigma}^{(-1)} \right) e^{K \left( \bm{\sigma}^{(-1)}, \cdots, \bm{\sigma}^{(-n)} \right) - K_{\mathrm{max}}} \nonumber \\
 &=& 0
\end{eqnarray}
for arbitrary $\left\{ \bm{\sigma}^{(-m)} \right\}_{m=1}^n$, which implies Eq. (\ref{eq:PD_normalized}).
In addition, because the Press-Dyson vectors $\hat{T}_a^{(1)}$ of a memory-one ZD strategy satisfies
\begin{eqnarray}
 \hat{T}_a^{(1)} \left( \sigma_a | \bm{\sigma}^{(-1)} \right) && \left\{
  \begin{array}{ll}
    \leq 0, & \left(\sigma_a = \sigma^{(-1)}_a \right) \\
    \geq 0, & \left(\sigma_a \neq \sigma^{(-1)}_a \right)
  \end{array}
  \right.
\end{eqnarray}
for all $\sigma_a$ and $\bm{\sigma}^{(-1)}$, and the sign of $\hat{T}_a \left( \sigma_a | \bm{\sigma}^{(-1)}, \cdots, \bm{\sigma}^{(-n)} \right)$ is the same as that of $\hat{T}_a^{(1)} \left( \sigma_a | \bm{\sigma}^{(-1)} \right)$, we obtain Eq. (\ref{eq:property_strategy}) for all $\sigma_a$ and $\left\{ \bm{\sigma}^{(-m)} \right\}_{m=1}^n$
Furthermore, since the Press-Dyson vectors $\hat{T}_a^{(1)}$ of a memory-one ZD strategy satisfies
\begin{eqnarray}
 \left| \hat{T}_a^{(1)} \left( \sigma_a | \bm{\sigma}^{(-1)} \right) \right| &\leq& 1 \quad \left( \forall \sigma_a, \forall \bm{\sigma}^{(-1)} \right),
\end{eqnarray}
and then Eq. (\ref{eq:mnZDS_bias}) satisfies
\begin{eqnarray}
 \left| \hat{T}_a \left( \sigma_a | \bm{\sigma}^{(-1)}, \cdots, \bm{\sigma}^{(-n)} \right) \right| &=& \left| \hat{T}_a^{(1)} \left( \sigma_a | \bm{\sigma}^{(-1)} \right) e^{K \left( \bm{\sigma}^{(-1)}, \cdots, \bm{\sigma}^{(-n)} \right) - K_{\mathrm{max}}} \right| \nonumber \\
 &=& \left| \hat{T}_a^{(1)} \left( \sigma_a | \bm{\sigma}^{(-1)} \right) \right| e^{K \left( \bm{\sigma}^{(-1)}, \cdots, \bm{\sigma}^{(-n)} \right) - K_{\mathrm{max}}} \nonumber \\
 &\leq& \left| \hat{T}_a^{(1)} \left( \sigma_a | \bm{\sigma}^{(-1)} \right) \right| \nonumber \\
 &\leq& 1 \quad \left( \forall \sigma_a,  \forall \left\{ \bm{\sigma}^{(-m)} \right\}_{m=1}^n \right),
\end{eqnarray}
we obtain Eq. (\ref{eq:condition_strategy}) for all $\sigma_a$, $\bm{\sigma}^{(-1)}$, $\cdots$, $\bm{\sigma}^{(-n)}$

Next, from Eqs. (\ref{eq:mnZDS_bias}) and (\ref{eq:moZDS_mod}), we obtain
\begin{eqnarray}
 \sum_{\sigma_a} c_{\sigma_a} \hat{T}_a \left( \sigma_a | \bm{\sigma}^{(-1)}, \cdots, \bm{\sigma}^{(-n)} \right) &=& \sum_{b=0}^N \alpha_b s_b \left( \bm{\sigma}^{(-1)} \right) e^{K \left( \bm{\sigma}^{(-1)}, \cdots, \bm{\sigma}^{(-n)} \right) - K_{\mathrm{max}}}. \nonumber \\
 &&
\end{eqnarray}
By calculating expectations of the both sides with respect to the corresponding limit distribution $P^{*} \left( \bm{\sigma}^{(-1)}, \cdots, \bm{\sigma}^{(-n)} \right)$ and using Lemma \ref{lemma:Akin}, we obtain
\begin{eqnarray}
 0 &=& \sum_{b=0}^N \alpha_b \left\langle s_b\left( \bm{\sigma}^{(-1)} \right) e^{K \left( \bm{\sigma}^{(-1)}, \cdots, \bm{\sigma}^{(-n)} \right) - K_{\mathrm{max}}} \right\rangle^{*}.
 \label{eq:linear_biased_unnormalized}
\end{eqnarray}
Furthermore, if
\begin{eqnarray}
 \left\langle e^{K \left( \bm{\sigma}^{(-1)}, \cdots, \bm{\sigma}^{(-n)} \right)} \right\rangle^{*} &\neq& 0,
\end{eqnarray}
by dividing the both sides of Eq. (\ref{eq:linear_biased_unnormalized}) by $\left\langle e^{K \left( \bm{\sigma}^{(-2)}, \cdots, \bm{\sigma}^{(-n)} \right) - K_{\mathrm{max}}} \right\rangle^{*}$, we obtain Eq. (\ref{eq:linear_biased}).
Otherwise, the equality
\begin{eqnarray}
 \left\langle e^{K \left( \bm{\sigma}^{(-1)}, \cdots, \bm{\sigma}^{(-n)} \right)} \right\rangle^{*} &=& 0
\end{eqnarray}
holds.
This equality is rewritten as
\begin{eqnarray}
 0 &=& \sum_{\bm{\sigma}^{(-1)}, \cdots, \bm{\sigma}^{(-n)}} P^{*} \left( \bm{\sigma}^{(-1)}, \cdots, \bm{\sigma}^{(-n)} \right) e^{K \left( \bm{\sigma}^{(-1)}, \cdots, \bm{\sigma}^{(-n)} \right)}.
\end{eqnarray}
However, because $e^K$ is non-negative, this equality implies Eq. (\ref{eq:linear_biased_trivial}).
$\Box$
\end{proof}

Theorem \ref{th:biasZDS} can be regarded as an extension of Proposition \ref{prop:trigger}.
It should be noted that the limit probability distribution $P^{*} \left( \bm{\sigma}^{(-1)}, \cdots, \bm{\sigma}^{(-n)} \right)$ depends on strategies.
We call strategies in this Theorem as \emph{biased memory-$n$ ZD strategies}.
Controlling biased expectations of payoffs is useful for strengthening memory-one ZD strategies, because players can choose biased functions in such a way that unfavorable action profiles to one player are more weighted. 
Biased ensembles are used to amplify rare events in the same way as evolution in population genetics.
When we consider situation where each group with $N$ players is selected by fitness $e^K$, expected payoffs in such situation are calculated by our biased expectations. 
Such situation may be useful in the context of multilevel selection \cite{TraNow2006}, if $K$ is given by the total payoffs of all players in one group, for instance.
Furthermore, Theorem \ref{th:biasZDS} contains the following three corollaries.

\begin{corollary}
\label{cor:eleZDS}
Let $\hat{T}_a^{(1)}$ be Press-Dyson vectors of a memory-one ZD strategy satisfying Eq. (\ref{eq:moZDS_mod}).
Let $\left\{ \hat{\bm{\sigma}}^{(-m)} \right\}_{m=1}^n$ be some action profiles.
If $\sum_{b=0}^N \alpha_b s_b \left( \hat{\bm{\sigma}}^{(-1)} \right) \neq 0$, then a memory-$n$ strategy
\begin{eqnarray}
 \hat{T}_a \left( \sigma_a | \bm{\sigma}^{(-1)}, \cdots, \bm{\sigma}^{(-n)} \right) &=& \hat{T}_a^{(1)} \left( \sigma_a | \bm{\sigma}^{(-1)} \right) \prod_{m=1}^n \delta_{\bm{\sigma}^{(-m)}, \hat{\bm{\sigma}}^{(-m)}} \nonumber \\
 && \quad \left( \forall \sigma_a,  \forall \left\{ \bm{\sigma}^{(-m)} \right\}_{m=1}^n \right)
 \label{eq:mnZDS_ele}
\end{eqnarray}
unilaterally enforces the equation
\begin{eqnarray}
 P^{*} \left( \hat{\bm{\sigma}}^{(-1)}, \cdots, \hat{\bm{\sigma}}^{(-n)} \right) &=& 0.
 \label{eq:linear_ele}
\end{eqnarray}
\end{corollary}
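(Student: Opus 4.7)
The plan is to recognize that the strategy (\ref{eq:mnZDS_ele}) has exactly the form treated in Proposition \ref{prop:trigger}, and so the corollary reduces to checking that the hypothesis of Proposition \ref{prop:trigger}, namely the existence of some $\sigma_a^*$ with $\hat{T}_a^{(1)}(\sigma_a^* \mid \hat{\bm{\sigma}}^{(-1)}) \neq 0$, is automatic under the extra condition $\sum_{b=0}^{N} \alpha_b s_b(\hat{\bm{\sigma}}^{(-1)}) \neq 0$ that the corollary imposes. Once such a $\sigma_a^*$ is produced, no further work is required: Proposition \ref{prop:trigger} directly delivers the conclusion (\ref{eq:linear_ele}).

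To establish the existence of $\sigma_a^*$, I would evaluate the memory-one ZD defining relation (\ref{eq:moZDS_mod}) at the specific previous state $\bm{\sigma}^{(-1)} = \hat{\bm{\sigma}}^{(-1)}$. This gives
\begin{eqnarray}
\sum_{\sigma_a} c_{\sigma_a}\, \hat{T}_a^{(1)}\!\left( \sigma_a \mid \hat{\bm{\sigma}}^{(-1)} \right) &=& \sum_{b=0}^N \alpha_b s_b\!\left( \hat{\bm{\sigma}}^{(-1)} \right) \neq 0,
\end{eqnarray}
by the standing assumption of the corollary. Hence the linear combination on the left cannot vanish, which forces at least one summand $c_{\sigma_a^*}\, \hat{T}_a^{(1)}(\sigma_a^* \mid \hat{\bm{\sigma}}^{(-1)})$ to be nonzero, and in particular $\hat{T}_a^{(1)}(\sigma_a^* \mid \hat{\bm{\sigma}}^{(-1)}) \neq 0$.

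With that pointwise nonvanishing in hand, Proposition \ref{prop:trigger} applies verbatim to the tensor (\ref{eq:mnZDS_ele}) and yields $P^{(\mathrm{st})}(\hat{\bm{\sigma}}^{(-1)}, \ldots, \hat{\bm{\sigma}}^{(-n)}) = 0$. There is no genuine obstacle; the only subtle point worth noting is why one cannot simply cite Theorem \ref{th:biasZDS} directly: the product of Kronecker deltas $\prod_m \delta_{\bm{\sigma}^{(-m)}, \hat{\bm{\sigma}}^{(-m)}}$ is not literally of the form $e^{K - K_{\max}}$ with $K$ a finite real-valued function, so the corollary is best viewed as the degenerate ``support-shrunk to a point'' limit of Theorem \ref{th:biasZDS}, which is precisely the regime already isolated by Proposition \ref{prop:trigger}.
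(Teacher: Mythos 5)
Your proof is correct, but it takes a different route from the one the paper writes down. The paper proves this corollary by specializing Theorem \ref{th:biasZDS}: it sets $e^{K} = \prod_{m=1}^n \delta_{\bm{\sigma}^{(-m)}, \hat{\bm{\sigma}}^{(-m)}}$, so that the biased linear relation (\ref{eq:linear_biased_unnormalized}) collapses to $0 = \bigl(\sum_{b=0}^N \alpha_b s_b(\hat{\bm{\sigma}}^{(-1)})\bigr) P^{(\mathrm{st})}(\hat{\bm{\sigma}}^{(-1)},\ldots,\hat{\bm{\sigma}}^{(-n)})$, and the hypothesis $\sum_b \alpha_b s_b(\hat{\bm{\sigma}}^{(-1)}) \neq 0$ is used directly as the nonvanishing coefficient in front of the probability. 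You instead go through Proposition \ref{prop:trigger}, applying Akin's lemma to a single component $\hat{T}_a(\sigma_a^* \mid \cdot)$ rather than to the ZD combination $\sum_{\sigma_a} c_{\sigma_a} \hat{T}_a$, and you use the hypothesis only indirectly, to extract from Eq. (\ref{eq:moZDS_mod}) evaluated at $\hat{\bm{\sigma}}^{(-1)}$ the existence of some $\sigma_a^*$ with $\hat{T}_a^{(1)}(\sigma_a^* \mid \hat{\bm{\sigma}}^{(-1)}) \neq 0$. That deduction is valid, and the paper itself remarks immediately after its proof that the corollary ``can also be derived directly from Proposition \ref{prop:trigger}'' --- so you have in effect supplied the details of that acknowledged alternative. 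Your closing observation is also fair: a product of Kronecker deltas is not $e^{K}$ for a finite real-valued $K$, so the paper's specialization is a mild abuse (though a harmless one, since the proof of Theorem \ref{th:biasZDS} only uses $0 \le e^{K - K_{\mathrm{max}}} \le 1$, which the indicator satisfies); your route avoids this entirely, at the cost of needing the small extra lemma that the ZD hypothesis forces a nonvanishing component.
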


\begin{proof}
By choosing the function $K$ such that
\begin{eqnarray}
 e^{K \left( \bm{\sigma}^{(-1)}, \cdots, \bm{\sigma}^{(-n)} \right)} &=& \prod_{m=1}^n \delta_{\bm{\sigma}^{(-m)}, \hat{\bm{\sigma}}^{(-m)}}
\end{eqnarray}
in Eq. (\ref{eq:mnZDS_bias}), we obtain
\begin{eqnarray}
 0 &=& \sum_{b=0}^N \alpha_b \left\langle s_b\left( \bm{\sigma}^{(-1)} \right) \prod_{m=1}^n \delta_{\bm{\sigma}^{(-m)}, \hat{\bm{\sigma}}^{(-m)}} \right\rangle^{*}.
\end{eqnarray}
(We remark that $K$ can be $-\infty$.)
It should be noted that $K_{\mathrm{max}}=0$.
By using the assumption $\sum_{b=0}^N \alpha_b s_b \left( \hat{\bm{\sigma}}^{(-1)} \right) \neq 0$, we obtain the result (\ref{eq:linear_ele}).
$\Box$
\end{proof}
Corollary \ref{cor:eleZDS} can also be derived directly from Proposition \ref{prop:trigger}.

\begin{corollary}
\label{cor:ZDS_conditional}
Let $\hat{T}_a^{(1)}$ be Press-Dyson vectors of a memory-one ZD strategy satisfying Eq. (\ref{eq:moZDS_mod}).
Let $\left\{ \hat{\bm{\sigma}}^{(-m)} \right\}_{m=2}^n$ be some action profiles.
Then a memory-$n$ strategy
\begin{eqnarray}
 \hat{T}_a \left( \sigma_a | \bm{\sigma}^{(-1)}, \cdots, \bm{\sigma}^{(-n)} \right) &=& \hat{T}_a^{(1)} \left( \sigma_a | \bm{\sigma}^{(-1)} \right) \prod_{m=2}^n \delta_{\bm{\sigma}^{(-m)}, \hat{\bm{\sigma}}^{(-m)}} \nonumber \\
 && \quad \left( \forall \sigma_a,  \forall \left\{ \bm{\sigma}^{(-m)} \right\}_{m=1}^n \right)
 \label{eq:mnZDS_conditional}
\end{eqnarray}
unilaterally enforces either a linear relation between conditional expectations of payoffs when the history of the previous $n-1$ action profiles is $\hat{\bm{\sigma}}^{(-2)}$, $\cdots$, $\hat{\bm{\sigma}}^{(-n)}$
\begin{eqnarray}
 0 &=& \sum_{b=0}^N \alpha_b \frac{\left\langle s_b\left( \bm{\sigma}^{(-1)} \right) \prod_{m=2}^n \delta_{\bm{\sigma}^{(-m)}, \hat{\bm{\sigma}}^{(-m)}} \right\rangle^{*}}{\left\langle \prod_{m=2}^n \delta_{\bm{\sigma}^{(-m)}, \hat{\bm{\sigma}}^{(-m)}} \right\rangle^{*}}
 \label{eq:linear_conditional}
\end{eqnarray}
or the relation
\begin{eqnarray}
 0 &=& \left\langle \prod_{m=2}^n \delta_{\bm{\sigma}^{(-m)}, \hat{\bm{\sigma}}^{(-m)}} \right\rangle^{*}.
 \label{eq:linear_conditional_trivial}
\end{eqnarray}
\end{corollary}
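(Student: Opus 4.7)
The plan is to obtain Corollary \ref{cor:ZDS_conditional} as a direct specialization of Theorem \ref{th:biasZDS}, mirroring the argument used for Corollary \ref{cor:eleZDS} but leaving the most recent history variable $\bm{\sigma}^{(-1)}$ free. Concretely, I would apply Theorem \ref{th:biasZDS} to the memory-one ZD vector $\hat{T}_a^{(1)}$ with bias function
\begin{eqnarray}
e^{K\left(\bm{\sigma}^{(-1)}, \ldots, \bm{\sigma}^{(-n)}\right)} &=& \prod_{m=2}^{n} \delta_{\bm{\sigma}^{(-m)}, \hat{\bm{\sigma}}^{(-m)}},
\end{eqnarray}
so that $K_{\mathrm{max}} = 0$ and the strategy (\ref{eq:mnZDS_bias}) reduces exactly to the tensor (\ref{eq:mnZDS_conditional}). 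A quick check that this tensor still obeys the strategy conditions (\ref{eq:PD_normalized})--(\ref{eq:condition_strategy}) is immediate: the delta-product is $\{0, 1\}$-valued, multiplication by it preserves the sign structure (\ref{eq:property_strategy}) and bound (\ref{eq:condition_strategy}) inherited from $\hat{T}_a^{(1)}$, and the normalization follows from $\sum_{\sigma_a} \hat{T}_a^{(1)}\left(\sigma_a | \bm{\sigma}^{(-1)}\right) = 0$.

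With Theorem \ref{th:biasZDS} in force, its dichotomy translates directly to the two alternatives of the corollary. In the non-degenerate case $\left\langle \prod_{m=2}^{n} \delta_{\bm{\sigma}^{(-m)}, \hat{\bm{\sigma}}^{(-m)}} \right\rangle^{(\mathrm{st})} \neq 0$, substituting our $e^K$ into (\ref{eq:linear_biased}) yields equation (\ref{eq:linear_conditional}) verbatim, with the right-hand side naturally interpreted as a linear combination of conditional expectations of $s_b$ given the fixed tail history $\hat{\bm{\sigma}}^{(-2)}, \ldots, \hat{\bm{\sigma}}^{(-n)}$. In the degenerate case, the vanishing of $\left\langle e^{K} \right\rangle^{(\mathrm{st})}$ is precisely equation (\ref{eq:linear_conditional_trivial}); alternatively one may read it off from (\ref{eq:linear_biased_trivial}) by summing the pointwise vanishing over $\bm{\sigma}^{(-1)}$ across $\supp e^{K}$.

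One structural point worth flagging, as the main difference from Corollary \ref{cor:eleZDS}, is that no hypothesis analogous to $\sum_{b} \alpha_b s_b\left(\hat{\bm{\sigma}}^{(-1)}\right) \neq 0$ is available to eliminate the trivial branch: here $\bm{\sigma}^{(-1)}$ is summed in both the numerator and denominator of (\ref{eq:linear_conditional}), so both alternatives must be retained in the statement. Beyond this small bookkeeping observation, I expect no substantive obstacle; once Theorem \ref{th:biasZDS} is available, the proof is essentially a single substitution and a renaming of terms.
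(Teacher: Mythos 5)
Your proposal is correct and follows essentially the same route as the paper: the paper's own proof of Corollary \ref{cor:ZDS_conditional} likewise specializes Theorem \ref{th:biasZDS} by choosing $e^{K\left( \bm{\sigma}^{(-1)}, \cdots, \bm{\sigma}^{(-n)} \right)} = \prod_{m=2}^n \delta_{\bm{\sigma}^{(-m)}, \hat{\bm{\sigma}}^{(-m)}}$ and reads off the two alternatives. Your added remarks (verifying the strategy conditions, and noting that no analogue of the hypothesis $\sum_b \alpha_b s_b\left( \hat{\bm{\sigma}}^{(-1)} \right) \neq 0$ is available to discard the trivial branch) are accurate but not part of the paper's argument, which instead closes by rewriting Eq. (\ref{eq:linear_conditional_trivial}) as a condition on the marginal distribution.
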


\begin{proof}
By choosing the function $K$ such that
\begin{eqnarray}
 e^{K \left( \bm{\sigma}^{(-1)}, \cdots, \bm{\sigma}^{(-n)} \right)} &=& \prod_{m=2}^n \delta_{\bm{\sigma}^{(-m)}, \hat{\bm{\sigma}}^{(-m)}}
\end{eqnarray}
in Eq. (\ref{eq:mnZDS_bias}), we obtain the relation (\ref{eq:linear_conditional}) or the relation (\ref{eq:linear_conditional_trivial}).
It should be noted that Eq. (\ref{eq:linear_conditional_trivial}) can be rewritten as
\begin{eqnarray}
 0 &=& \sum_{\bm{\sigma}^{(-1)}, \cdots, \bm{\sigma}^{(-n)}} P^{*} \left( \bm{\sigma}^{(-1)}, \cdots, \bm{\sigma}^{(-n)} \right) \prod_{m=2}^n \delta_{\bm{\sigma}^{(-m)}, \hat{\bm{\sigma}}^{(-m)}} \nonumber \\
 &=& \sum_{\bm{\sigma}^{(-1)}} P^{*} \left( \bm{\sigma}^{(-1)}, \hat{\bm{\sigma}}^{(-2)} \cdots, \hat{\bm{\sigma}}^{(-n)} \right),
\end{eqnarray}
which is a relation on the marginal distribution.
$\Box$
\end{proof}

We remark that memory-$n$ strategies (\ref{eq:mnZDS_conditional}) approach the strategy ``Repeat'' \cite{Aki2012}
\begin{eqnarray}
 \hat{T}_a \left( \sigma_a | \bm{\sigma}^{(-1)}, \cdots, \bm{\sigma}^{(-n)} \right) &=& 0 \quad \left( \forall \sigma_a,  \forall \left\{ \bm{\sigma}^{(-m)} \right\}_{m=1}^n \right)
\end{eqnarray}
(which repeats the previous action of the player) as $n$ increases, because $\prod_{m=2}^n \delta_{\bm{\sigma}^{(-m)}, \hat{\bm{\sigma}}^{(-m)}}=0$ for most $\left\{ \bm{\sigma}^{(-m)} \right\}_{m=1}^n$.
This property is used to control only conditional expectations when history of the action profiles is $\left( \hat{\bm{\sigma}}^{(-2)}, \cdots, \hat{\bm{\sigma}}^{(-n)} \right)$.

\begin{corollary}
\label{cor:mnZDS_factorized}
Memory-$n$ strategies of the form (\ref{eq:mnZDS_bias}) contain factorable memory-$n$ ZD strategies (\ref{eq:mnZDS}):
\begin{eqnarray}
 \sum_{\sigma_a} c_{\sigma_a} \hat{T}_a \left( \sigma_a | \bm{\sigma}^{(-1)}, \cdots, \bm{\sigma}^{(-n)} \right) &=& \prod_{m=1}^n \sum_{b_m=0}^N \alpha_{b_m}^{(m)} s_{b_m} \left( \bm{\sigma}^{(-m)} \right).
 \label{eq:mnZDS_factorized}
\end{eqnarray}
\end{corollary}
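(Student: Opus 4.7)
The plan is to construct, from the factorable form (\ref{eq:mnZDS_factorized}), a pair $\left( \hat{T}_a^{(1)}, K \right)$ that realizes it through the biased template (\ref{eq:mnZDS_bias}). The natural splitting is to single out the $m=1$ factor, which depends only on $\bm{\sigma}^{(-1)}$ and therefore belongs in the memory-one ZD part, and to fold the remaining factors, which depend only on the older states $\bm{\sigma}^{(-2)}, \ldots, \bm{\sigma}^{(-n)}$, into the biasing weight $e^{K - K_{\mathrm{max}}}$.

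Concretely, I would first take $\hat{T}_a^{(1)}\left( \sigma_a | \bm{\sigma}^{(-1)} \right)$ to be the Press-Dyson vectors of a memory-one ZD strategy whose coefficients are $\alpha_b = \alpha_b^{(1)}$ in Eq.~(\ref{eq:moZDS_mod}); such a strategy exists in the sense of Definition \ref{def:moZDS}, with an overall scaling kept free for the moment. Next, I would set
\[
 K\left( \bm{\sigma}^{(-1)}, \ldots, \bm{\sigma}^{(-n)} \right) := \sum_{m=2}^{n} \log \left[ \sum_{b_m=0}^{N} \alpha_{b_m}^{(m)} s_{b_m}\left( \bm{\sigma}^{(-m)} \right) \right],
\]
so that $e^{K - K_{\mathrm{max}}}$ equals $\prod_{m=2}^{n} \sum_{b_m} \alpha_{b_m}^{(m)} s_{b_m}\left( \bm{\sigma}^{(-m)} \right)$ divided by its maximum value over $\left( \bm{\sigma}^{(-2)}, \ldots, \bm{\sigma}^{(-n)} \right)$. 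Substituting these choices into (\ref{eq:mnZDS_bias}), multiplying by $c_{\sigma_a}$, and summing over $\sigma_a$, then reproduces the factorable form (\ref{eq:mnZDS_factorized}) up to an overall constant $e^{-K_{\mathrm{max}}}$, which is absorbed by rescaling all $\alpha_{b_1}^{(1)}$ (equivalently, rescaling $\hat{T}_a^{(1)}$).

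The hard part is that the logarithm in the definition of $K$ requires each factor $\sum_{b_m} \alpha_{b_m}^{(m)} s_{b_m}\left( \bm{\sigma}^{(-m)} \right)$ to be strictly positive over its entire domain. I would therefore read the corollary as asserting that the biased family (\ref{eq:mnZDS_bias}) \emph{contains} those factorable memory-$n$ ZD strategies whose factors for $m \geq 2$ are positive; the boundary case of vanishing factors can then be recovered as a limit in which $K \to -\infty$ at the corresponding arguments while $e^{K}$ remains finite. Once this positivity issue is addressed, the remaining verifications are routine: the normalization (\ref{eq:PD_normalized}) descends from $\sum_{\sigma_a} \hat{T}_a^{(1)} = 0$, the sign condition (\ref{eq:property_strategy}) is inherited from $\hat{T}_a^{(1)}$ because $e^{K - K_{\mathrm{max}}} \geq 0$, and $\left| \hat{T}_a \right| \leq 1$ follows from $\left| \hat{T}_a^{(1)} \right| \leq 1$ together with $e^{K - K_{\mathrm{max}}} \leq 1$.
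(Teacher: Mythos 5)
Your construction is essentially the paper's own: the paper likewise splits off the $m=1$ factor as the memory-one ZD part and writes $K=\sum_{m=2}^{n}K_m$ with $G_m:=e^{K_m}=\sum_{b_m}\alpha_{b_m}^{(m)}s_{b_m}$, so your $K_m=\log\bigl[\sum_{b_m}\alpha_{b_m}^{(m)}s_{b_m}\bigr]$ is the same decomposition read in the opposite direction. Your added care about strict positivity of the $m\geq 2$ factors and about absorbing the constant $e^{-K_{\mathrm{max}}}$ only makes explicit what the paper leaves implicit in requiring $G_m\geq 0$.
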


\begin{proof}
In Theorem \ref{th:biasZDS}, when the quantity $K\left( \bm{\sigma}^{(-1)}, \cdots, \bm{\sigma}^{(-n)} \right)$ is written in the form $\sum_{m=2}^n K_m \left( \bm{\sigma}^{(-m)} \right)$, and the quantity $G_m \left( \bm{\sigma} \right) := e^{ K_m \left( \bm{\sigma} \right)} \geq 0$ $(m\geq 2)$ is written by payoffs in the form
\begin{eqnarray}
 G_m \left( \bm{\sigma} \right) &=& \sum_{b_m=0}^N \alpha_{b_m}^{(m)} s_{b_m} \left( \bm{\sigma} \right)
\end{eqnarray}
with some coefficients $\left\{ \alpha_{b_m}^{(m)} \right\}$, then the strategies (\ref{eq:mnZDS_bias}) are reduced to memory-$n$ ZD strategies (\ref{eq:mnZDS_factorized}).
$\Box$
\end{proof}

\section{Examples: the prisoner's dilemma game}
\label{sec:examples}
\subsection{Setup}
\label{subsec:RPD}
We consider the repeated prisoner's dilemma game as an example.
The prisoner's dilemma game is the simplest two-player two-action game, where each player chooses cooperation (written as $1$) or defection (written as $2$) in each round \cite{PreDys2012}.
Payoffs of two players are 
\begin{eqnarray}
 \left( s_1 \left( 1, 1 \right), s_1 \left( 1, 2 \right), s_1 \left( 2, 1 \right), s_1 \left( 2, 2 \right) \right) &=& (R,S,T,P) \\
 \left( s_2 \left( 1, 1 \right), s_2 \left( 1, 2 \right), s_2 \left( 2, 1 \right), s_2 \left( 2, 2 \right) \right) &=& (R,T,S,P)
\end{eqnarray}
with $T>R>P>S$.
Although the Nash equilibrium of the one-shot game is $(2,2)$, it has been known that cooperative Nash equilibria exist when this game is repeated infinitely many times and the discounting factor is large enough (folk theorem).

\subsection{Biased memory-$n$ ZD strategies}
\label{subsec:mnZDS_RPD}
In the repeated prisoner's dilemma game, the following strategies are contained in biased memory-$n$ ZD strategies of player $1$:
\begin{eqnarray}
 \hat{T}_1\left( 1 | \bm{\sigma}^{(-1)}, \cdots, \bm{\sigma}^{(-n)} \right) &=& \prod_{m=1}^n G_m \left( \bm{\sigma}^{(-m)} \right)
\end{eqnarray}
where $G_1$ is a Press-Dyson vector of memory-one ZD strategies and $G_m$ $(m\geq 2)$ is non-negative quantity.
For example, when we assume that $2R>T+S$ and $2P<T+S$, they are
\begin{eqnarray}
 G_1 \left( \cdot \right) &\in& \left\{ -\frac{1}{T-P} \left[ s_2\left( \cdot \right) - P \right],  -\frac{1}{R-S} \left[ s_2\left( \cdot \right) - R \right], \frac{1}{T-S} \left[ s_1\left( \cdot \right) - s_2\left( \cdot \right) \right], \right. \nonumber \\
 && \left. -\frac{1}{B} \left[ s_1\left( \cdot \right) + s_2\left( \cdot \right) - (T+S) \right], \cdots \right\}
\end{eqnarray}
with $B:=\mathrm{max} \left\{ 2R-(T+S), (T+S)-2P \right\}$, and
\begin{eqnarray}
 G_m \left( \cdot \right) &\in& \left\{ \frac{1}{2(R-P)} \left[ s_1\left( \cdot \right) +  s_2\left( \cdot \right) -2P \right], \frac{1}{2(T-S)} \left[ s_1\left( \cdot \right) -  s_2\left( \cdot \right) + (T-S) \right], \right. \nonumber \\
 &&  -\frac{1}{2(T-S)} \left[ s_1\left( \cdot \right) -  s_2\left( \cdot\right) - (T-S) \right], \frac{1}{T-S} \left[ s_1\left( \cdot \right) - S \right], \nonumber \\
 && \left. - \frac{1}{T-S} \left[ s_1\left( \cdot \right) - T \right], \frac{1}{T-S} \left[ s_2\left( \cdot \right) - S \right], - \frac{1}{T-S} \left[ s_2\left( \cdot \right) - T \right], \cdots \right\}
\end{eqnarray}
for each $m\geq 2$.
We remark that the set of $G_m$ $(m\geq 1)$ also contains quantities which cannot be represented by payoffs.
A linear relation enforced by the ZD strategy is
\begin{eqnarray}
 0 &=& \left\langle \prod_{m=1}^n G_m \left( \bm{\sigma}^{(-m)} \right) \right\rangle^{*},
\end{eqnarray}
that is, a linear relation between correlation functions of payoffs.
These strategies contain all memory-two ZD strategies reported in Ref. \cite{Ued2021b}.

\subsection{Extension of Tit-for-Tat}
\label{subsec:ETFT}
For example, the memory-$n$ strategy
\begin{eqnarray}
 && \hat{T}_1\left( 1 | \bm{\sigma}^{(-1)}, \cdots, \bm{\sigma}^{(-n)} \right) \nonumber \\
  &=& \frac{1}{2^{n-1}(T-S)^n} \left[ s_1\left( \bm{\sigma}^{(-1)} \right) - s_2\left( \bm{\sigma}^{(-1)} \right) \right] \prod_{m=2}^{n} \left[ s_2\left( \bm{\sigma}^{(-m)} \right) - s_1\left( \bm{\sigma}^{(-m)} \right) + (T-S) \right] \nonumber \\
 &=& \left[ - \delta_{\bm{\sigma}^{(-1)}, (1,2)} + \delta_{\bm{\sigma}^{(-1)}, (2,1)} \right] \prod_{m=2}^{n} \left[ \frac{1}{2} \delta_{\bm{\sigma}^{(-m)}, (1,1)} + \delta_{\bm{\sigma}^{(-m)}, (1,2)} + \frac{1}{2} \delta_{\bm{\sigma}^{(-m)}, (2,2)} \right]
  \label{eq:TFTn}
\end{eqnarray}
is a memory-$n$ ZD strategy, which unilaterally enforces
\begin{eqnarray}
 0 &=& \left\langle \left\{ s_1\left( \bm{\sigma}^{(-1)} \right) - s_2\left( \bm{\sigma}^{(-1)} \right) \right\} \prod_{m=2}^n \left\{ s_2\left( \bm{\sigma}^{(-m)} \right) - s_1\left( \bm{\sigma}^{(-m)} \right) + (T-S) \right\} \right\rangle^{*}. \nonumber \\
 &&
 \label{eq:linear_TFTn}
\end{eqnarray}
When $n=1$, this strategy is reduced to Tit-for-Tat (TFT) strategy, which unilaterally enforces $\left\langle s_{1} \right\rangle^{*} = \left\langle s_{2} \right\rangle^{*}$ \cite{PreDys2012}.
Because Eq. (\ref{eq:TFTn}) becomes zero when $\bm{\sigma}^{(-m)}=(2,1)$ for some $m\geq 2$, Eq. (\ref{eq:linear_TFTn}) can be regarded as a fairness condition between two players in the situation where $(2,1)$ was not played in the previous $n-1$ rounds.
Since the action profile $(2,1)$ is favorable to player $1$, Eq. (\ref{eq:linear_TFTn}) can be regarded as a fairness condition when player $1$ is in an unfavorable position.
Therefore, this strategy may be stronger than TFT, and whether this statement is true or not should be studied in future.

Similarly, a slightly different memory-$n$ strategy
\begin{eqnarray}
 && \hat{T}_1\left( 1 | \bm{\sigma}^{(-1)}, \cdots, \bm{\sigma}^{(-n)} \right) \nonumber \\
  &=& \frac{1}{2^{n-1}(T-S)^n} \left[ s_1\left( \bm{\sigma}^{(-1)} \right) - s_2\left( \bm{\sigma}^{(-1)} \right) \right] \prod_{m=2}^{n} \left[ s_1\left( \bm{\sigma}^{(-m)} \right) - s_2\left( \bm{\sigma}^{(-m)} \right) + (T-S) \right] \nonumber \\
 &=& \left[ - \delta_{\bm{\sigma}^{(-1)}, (1,2)} + \delta_{\bm{\sigma}^{(-1)}, (2,1)} \right] \prod_{m=2}^{n} \left[ \frac{1}{2} \delta_{\bm{\sigma}^{(-m)}, (1,1)} + \delta_{\bm{\sigma}^{(-m)}, (2,1)} + \frac{1}{2} \delta_{\bm{\sigma}^{(-m)}, (2,2)} \right]
  \label{eq:TFTn2}
\end{eqnarray}
is also a memory-$n$ ZD strategy, which unilaterally enforces
\begin{eqnarray}
 0 &=& \left\langle \left\{ s_1\left( \bm{\sigma}^{(-1)} \right) - s_2\left( \bm{\sigma}^{(-1)} \right) \right\} \prod_{m=2}^n \left\{ s_1\left( \bm{\sigma}^{(-m)} \right) - s_2\left( \bm{\sigma}^{(-m)} \right) + (T-S) \right\} \right\rangle^{*}. \nonumber \\
 &&
 \label{eq:linear_TFTn2}
\end{eqnarray}
This strategy is also reduced to TFT when $n=1$.
Because expectation in Eq. (\ref{eq:linear_TFTn2}) is conditional on the action profiles $(1,1)$, $(2,1)$ and $(2,2)$, this strategy may be weaker than TFT.
We can also construct a mixed version of (\ref{eq:TFTn}) and (\ref{eq:TFTn2}) by choosing different $G_m$ for each $m\geq 2$.

\subsection{Grim Trigger as biased TFT}
\label{subsec:GT}
Finally, we again interpret the Grim Trigger strategy in terms of a biased memory-one ZD strategy.
We remark that the Grim Trigger strategy itself is not a ZD strategy.
In Eq. (\ref{eq:mnZDS_ele}), when we set $a=1$, $n=1$, $\hat{\bm{\sigma}}^{(-1)}=(1,2)$, and choose TFT as the strategy $\hat{T}_1^{(1)}$ of player $1$
\begin{eqnarray}
 \hat{T}_1^{(1)} \left( 1 | \bm{\sigma}^{(-1)} \right) &=& - \delta_{\bm{\sigma}^{(-1)}, (1,2)} + \delta_{\bm{\sigma}^{(-1)}, (2,1)},
\end{eqnarray}
we obtain
\begin{eqnarray}
 \hat{T}_1 \left( 1 | \bm{\sigma}^{(-1)} \right) &=& - \delta_{\bm{\sigma}^{(-1)}, (1,2)},
 \label{eq:PD_GT}
\end{eqnarray}
or $T_1 \left( 1 | \bm{\sigma}^{(-1)} \right) = \delta_{\bm{\sigma}^{(-1)}, (1,1)}$, which is the Grim Trigger strategy.
It should be noted that the action profile $(1,2)$ is unfavorable for player $1$.
Corollary \ref{cor:eleZDS} claims that the strategy unilaterally enforces
\begin{eqnarray}
 P^{*} \left( 1,2 \right) &=& 0,
\end{eqnarray}
which is consistent with Appendix \ref{app:memory-one}.
It has been known that, although a pair of TFT is not a subgame perfect equilibrium, a pair of Grim Trigger is a subgame perfect equilibrium.
Therefore, this result can be interpreted as that TFT gets strengthened by a bias function $\delta_{\bm{\sigma}^{(-1)}, (1,2)}$.

\section{Memory-$n$ deformed ZD strategies}
\label{sec:mnDZDS}
We can consider a memory-$n$ version of deformed ZD strategies (Definition \ref{def:deformedZDS}).
\begin{definition}
\label{def:deformed_mnZDS}
A memory-$n$ strategy of player $a$ is a \emph{deformed memory-$n$ ZD strategy} when its Press-Dyson tensors $\hat{T}_a$ can be written in the form
\begin{eqnarray}
 \hat{T}_a\left( \sigma_a | \bm{\sigma}^{(-1)}, \cdots, \bm{\sigma}^{(-n)} \right) &=& \prod_{m=1}^n G_m \left( \bm{\sigma}^{(-m)} \right),
\end{eqnarray}
where $G_1$ is a deformed memory-one ZD strategy and $G_m$ $(m\geq 2)$ are non-negative quantities.
\end{definition}
For example, in the prisoner's dilemma game, because of the equality
\begin{eqnarray}
 \frac{1}{T-S} \left[ s_1 \left( \bm{\sigma} \right) - s_2 \left( \bm{\sigma} \right) \right] &=& \frac{1}{T^k-S^k} \left[ s_1 \left( \bm{\sigma} \right)^k - s_2 \left( \bm{\sigma} \right)^k \right] \quad (k\geq 1) \\
 &=& \frac{1}{e^{hT}-e^{hS}} \left[ e^{hs_1 \left( \bm{\sigma} \right)} - e^{hs_2 \left( \bm{\sigma} \right)} \right] \quad (\forall h\in \mathbb{R})
\end{eqnarray}
for all $\bm{\sigma}$, the following strategy can be regarded as an extension of TFT strategy:
\begin{eqnarray}
 \hat{T}_1\left( 1 | \bm{\sigma}^{(-1)}, \cdots, \bm{\sigma}^{(-n)} \right) &=& \frac{1}{T-S} \left[ s_1 \left( \bm{\sigma}^{(-1)} \right) - s_2 \left( \bm{\sigma}^{(-1)} \right) \right] \prod_{m=2}^n G_m \left( \bm{\sigma}^{(-m)} \right). \nonumber \\
 &&
\end{eqnarray}
A linear relation enforced by the extended TFT is
\begin{eqnarray}
 0 &=& \left\langle \left[ s_1 \left( \bm{\sigma}^{(-1)} \right)^k - s_2 \left( \bm{\sigma}^{(-1)} \right)^k \right] \prod_{m=2}^n G_m \left( \bm{\sigma}^{(-m)} \right) \right\rangle^{*} \quad (k\geq 1) \\
  &=& \left\langle \left[ e^{hs_1 \left( \bm{\sigma}^{(-1)} \right)} - e^{hs_2 \left( \bm{\sigma}^{(-1)} \right)} \right] \prod_{m=2}^n G_m \left( \bm{\sigma}^{(-m)} \right) \right\rangle^{*}.
\end{eqnarray}
This linear relation can be interpreted as a fairness condition between two players.

In particular, because the identity
\begin{eqnarray}
 && \frac{1}{2(T-S)} \left[ s_2 \left( \bm{\sigma} \right) - s_1 \left( \bm{\sigma} \right) + (T-S) \right] \nonumber \\
 &=& \frac{1}{2\left( e^{hT}-e^{hS} \right)} \left[ e^{hs_2 \left( \bm{\sigma} \right)} - e^{hs_1 \left( \bm{\sigma} \right)} + \left( e^{hT}-e^{hS} \right) \right]
\end{eqnarray}
holds for arbitrary $h$, the strategy (\ref{eq:TFTn}) can be rewritten as
\begin{eqnarray}
 && \hat{T}_1\left( 1 | \bm{\sigma}^{(-1)}, \cdots, \bm{\sigma}^{(-n)} \right) \nonumber \\
  &=& \frac{1}{2^{n-1}\prod_{m=1}^n\left( e^{h_m T}-e^{h_m S} \right)} \left[ e^{h_1 s_1 \left( \bm{\sigma}^{(-1)} \right)} - e^{h_1 s_2 \left( \bm{\sigma}^{(-1)} \right)} \right] \nonumber \\
 && \times \prod_{m=2}^{n} \left[ e^{h_m s_2 \left( \bm{\sigma}^{(-m)}  \right)} - e^{h_m s_1 \left( \bm{\sigma}^{(-m)}  \right)} + \left( e^{h_m T}-e^{h_m S} \right) \right]
\end{eqnarray}
with arbitrary $\{ h_m \}$.
Therefore, we obtain the following proposition:

\begin{proposition}
\label{prop:TFTn}
The strategy (\ref{eq:TFTn}) simultaneously enforces linear relations
\begin{eqnarray}
 0 &=& \left\langle \left\{ e^{h_1 s_1 \left( \bm{\sigma}^{(-1)} \right)} - e^{h_1 s_2 \left( \bm{\sigma}^{(-1)} \right)} \right\} \prod_{m=2}^n \left\{ e^{h_m s_2 \left( \bm{\sigma}^{(-m)}  \right)} - e^{h_m s_1 \left( \bm{\sigma}^{(-m)}  \right)} + \left( e^{h_m T}-e^{h_m S} \right) \right\} \right\rangle^{*} \nonumber \\
 &&
\label{eq:linear_TFTn_mod}
\end{eqnarray}
with arbitrary $\{ h_m \}$.
\end{proposition}

Similarly, the strategy (\ref{eq:TFTn2}) can be rewritten as
\begin{eqnarray}
 && \hat{T}_1\left( 1 | \bm{\sigma}^{(-1)}, \cdots, \bm{\sigma}^{(-n)} \right) \nonumber \\
  &=& \frac{1}{2^{n-1}\prod_{m=1}^n\left( e^{h_m T}-e^{h_m S} \right)} \left[ e^{h_1 s_1 \left( \bm{\sigma}^{(-1)} \right)} - e^{h_1 s_2 \left( \bm{\sigma}^{(-1)} \right)} \right] \nonumber \\
 && \times \prod_{m=2}^{n} \left[ e^{h_m s_1 \left( \bm{\sigma}^{(-m)}  \right)} - e^{h_m s_2 \left( \bm{\sigma}^{(-m)}  \right)} + \left( e^{h_m T}-e^{h_m S} \right) \right]
\end{eqnarray}
with arbitrary $\{ h_m \}$, and the following proposition holds:

\begin{proposition}
\label{prop:TFTn}
The strategy (\ref{eq:TFTn2}) simultaneously enforces linear relations
\begin{eqnarray}
 0 &=& \left\langle \left\{ e^{h_1 s_1 \left( \bm{\sigma}^{(-1)} \right)} - e^{h_1 s_2 \left( \bm{\sigma}^{(-1)} \right)} \right\} \prod_{m=2}^n \left\{ e^{h_m s_1 \left( \bm{\sigma}^{(-m)}  \right)} - e^{h_m s_2 \left( \bm{\sigma}^{(-m)}  \right)} + \left( e^{h_m T}-e^{h_m S} \right) \right\} \right\rangle^{*} \nonumber \\
 &&
\label{eq:linear_TFTn2_mod}
\end{eqnarray}
with arbitrary $\{ h_m \}$.
\end{proposition}

It should be noted that the strategies (\ref{eq:TFTn}) and (\ref{eq:TFTn2}) themselves do not depend on parameters $\{ h_m \}$.
Therefore, the limit probability distribution $P^{*} \left( \bm{\sigma}^{(-1)}, \cdots, \bm{\sigma}^{(-n)} \right)$ also does not depend on $\{ h_m \}$.
By differentiating Eq. (\ref{eq:linear_TFTn_mod}) or Eq. (\ref{eq:linear_TFTn2_mod}) with respect to $\{ h_m \}$ arbitrary times, we can obtain an infinite number of payoff relations.

\section{Concluding remarks}
\label{sec:conclusion}
In this paper, we introduced the concept of biased memory-$n$ ZD strategies (\ref{eq:mnZDS_bias}), which unilaterally enforce linear relations between expected payoffs in biased ensembles or probability zero for a set of action profiles.
Biased memory-$n$ ZD strategies can be used to construct the original memory-$n$ ZD strategies in Ref. \cite{Ued2021b}, which unilaterally enforce linear relations between correlation functions of payoffs.
From another point of view, biased memory-$n$ ZD strategies can be regarded as extension of probability-controlling strategies introduced in Section \ref{sec:trigger}.
Furthermore, biased memory-$n$ ZD strategies can also be used to construct ZD strategies which unilaterally enforce linear relations between conditional expectations of payoffs.
Because the expectation of payoffs conditional on previous action profiles is used, players can choose conditions in such a way that only unfavorable action profiles to one player are contained in the conditions.
We provided several examples of biased memory-$n$ ZD strategies in the repeated prisoner's dilemma game.
Moreover, we explained that extension of deformed ZD strategies \cite{Ued2021} to the memory-$n$ case is straightforward.

The significance of this study is that we provided a method to interpret every time-independent finite-memory strategy in terms of linear relations about $P^*$ enforced by it.
As we can see in Theorem \ref{th:biasZDS}, when a memory-$n$ strategy is described as a biased memory-$n$ ZD strategy, it enforces some linear relation between expected payoffs in biased ensembles or probability zero for a set of action profiles.
Even if a memory-$n$ strategy is not described as a biased memory-$n$ ZD strategy, it can still enforce linear relations about $P^*$ (Lemma \ref{lemma:Akin}).
Such result may be useful for interpretation of strategies.
For example, as we saw in the repeated prisoner's dilemma game, Grim Trigger can be regarded as a memory-one strategy enforcing $P^{*}(1,2)=0$, which directly represents that Grim Trigger is unbeatable.
Furthermore, for any two-player symmetric potential games, the Imitate-If-Better strategy \cite{DOS2012b}, which imitates the opponent's previous action if and only if it was beaten in the previous round, has the similar property as Grim Trigger in the prisoner's dilemma game \cite{Ued2022}.
That is, it unilaterally enforces a linear relation between conditional expectations
\begin{eqnarray}
 0 &=& \left\langle \left\{  s_{a} \left( \bm{\sigma}^{(-1)} \right) - s_{-a} \left( \bm{\sigma}^{(-1)} \right) \right\} \mathbb{I}\left( s_{-a} \left( \bm{\sigma}^{(-1)} \right) > s_{a} \left( \bm{\sigma}^{(-1)} \right) \right) \right\rangle^{*}
\end{eqnarray}
 ($\mathbb{I}$ is an indicator function), which directly means that $\left\langle s_{a} \right\rangle^{*} \geq \left\langle  s_{-a} \right\rangle^{*}$.
(Grim Trigger is a special case of the Imitate-If-Better strategy for the prisoner's dilemma game.)
In this way, controlling conditional expectations can be useful when we investigate properties of finite-memory strategies.

Before ending this paper, we make two remarks.
The first remark is related to the existence of biased memory-$n$ ZD strategies.
It has been known that the existence of memory-one ZD strategies is highly dependent on the stage game \cite{UedTan2020}.
For example, no memory-one ZD strategies exist in the repeated rock-paper-scissors game.
However, because we construct biased memory-$n$ ZD strategies by using memory-one ZD strategies, the existence condition of biased memory-$n$ ZD strategies is the same as that of memory-one ZD strategies used for construction.
Clarifying the existence condition of memory-one ZD strategies is an important subject of future work.

The second remark is on the relation between biased memory-$n$ ZD strategies and equilibrium strategies.
Although we explained that Grim Trigger can be obtained by biasing TFT, other strategies can also be obtained.
For example, in subsection \ref{subsec:GT}, if we use $\hat{\bm{\sigma}}^{(-1)}=(2,1)$ instead of $\hat{\bm{\sigma}}^{(-1)}=(1,2)$, Eq. (\ref{eq:PD_GT}) becomes
\begin{eqnarray}
 \hat{T}_1 \left( 1 | \bm{\sigma}^{(-1)} \right) &=& - \delta_{\bm{\sigma}^{(-1)}, (1,2)},
\end{eqnarray}
which unilaterally enforces $P^{*} \left( 2,1 \right) = 0$.
This strategy forms neither a subgame perfect equilibrium nor a Nash equilibrium.
Therefore, the concept of biased memory-$n$ ZD strategies itself is generally not related to equilibrium strategies.
Rather, an important implication is that all time-independent finite-memory strategies can be interpreted by linear relations about $P^*$ enforced by it, as noted above.
Furthermore, in the repeated prisoner's dilemma game, memory-one Nash equilibria were characterized by using the formalism coming with memory-one ZD strategies \cite{Aki2012}.
In particular, a pair of equalizer strategies, which are one example of memory-one ZD strategies and unilaterally set the payoff of the opponent, forms a Nash equilibrium, because each player cannot improve his/her payoff as long as the opponent uses an equalizer strategy.
We would like to investigate whether our formalism can be used for characterization of memory-$n$ Nash equilibria in the repeated prisoner's dilemma game or not in future.

\appendix

\section{Akin's lemma for deterministic memory-one strategies}
\label{app:memory-one}
In this appendix, we provide results of Akin's lemma in the repeated prisoner's dilemma game.
We use the same notation as that in section \ref{sec:examples}.
We write a memory-one strategy of player $1$ as
\begin{eqnarray}
 \bm{T}_1 (1) &:=&  \left(
\begin{array}{c}
T_1 \left( 1 | 1, 1 \right) \\
T_1 \left( 1 | 1, 2 \right) \\
T_1 \left( 1 | 2, 1 \right) \\
T_1 \left( 1 | 2, 2 \right)
\end{array}
\right).
\end{eqnarray}
The number of deterministic memory-one strategies is sixteen \cite{UsuUed2021}.
The results of Akin's lemma are summarized in the Table \ref{table:Akin}.
\begin{table}[tb]
\caption{The results of Akin's lemma.}
\scalebox{0.85}{
  \begin{tabular}{|c|c|c|} \hline
   strategy $\bm{T}_1 (C)$ & name & Eq. (\ref{eq:gAkin}) \\ \hline
   $(1, 1, 1, 1)^\mathsf{T}$ & All-$C$ & $0 = P^{*}(2,1) + P^{*}(2,2)$ \\
   $(1, 1, 1, 0)^\mathsf{T}$ & & $0 = P^{*}(2,1)$ \\
   $(1, 1, 0, 1)^\mathsf{T}$ & & $0 = P^{*}(2,2)$ \\
   $(1, 1, 0, 0)^\mathsf{T}$ & Repeat & Nothing \\
   $(1, 0, 1, 1)^\mathsf{T}$ & & $0 = - P^{*}(1,2) + P^{*}(2,1) + P^{*}(2,2)$ \\
   $(1, 0, 1, 0)^\mathsf{T}$ & Tit-for-Tat & $0 = - P^{*}(1,2) + P^{*}(2,1)$ \\
   $(1, 0, 0, 1)^\mathsf{T}$ & Win-Stay Lose-Shift & $0 = - P^{*}(1,2) + P^{*}(2,2)$ \\
   $(1, 0, 0, 0)^\mathsf{T}$ & Grim Trigger & $0 = - P^{*}(1,2)$ \\
   $(0, 1, 1, 1)^\mathsf{T}$ & anti-Grim Trigger & $0 = - P^{*}(1,1) + P^{*}(2,1) + P^{*}(2,2)$ \\
   $(0, 1, 1, 0)^\mathsf{T}$ & anti-Win-Stay Lose-Shift & $0 = - P^{*}(1,1) + P^{*}(2,1)$ \\
   $(0, 1, 0, 1)^\mathsf{T}$ & anti-Tit-for-Tat & $0 = - P^{*}(1,1) + P^{*}(2,2)$ \\
   $(0, 1, 0, 0)^\mathsf{T}$ & & $0 = - P^{*}(1,1)$  \\
   $(0, 0, 1, 1)^\mathsf{T}$ & anti-Repeat & $0 = - P^{*}(1,1) - P^{*}(1,2) + P^{*}(2,1) + P^{*}(2,2)$  \\
   $(0, 0, 1, 0)^\mathsf{T}$ & & $0 = - P^{*}(1,1) - P^{*}(1,2) + P^{*}(2,1)$ \\
   $(0, 0, 0, 1)^\mathsf{T}$ & & $0 = - P^{*}(1,1) - P^{*}(1,2) + P^{*}(2,2)$ \\
   $(0, 0, 0, 0)^\mathsf{T}$ & All-$D$ & $0 = - P^{*}(1,1) - P^{*}(1,2)$ \\ \hline
  \end{tabular}
  }
  \label{table:Akin}
\end{table}
We can see that each strategy indeed enforces a linear relation between values of the limit probability distribution $P^{*}$.

\begin{acknowledgements}
We thank Genki Ichinose for valuable discussions.
This study was supported by JSPS KAKENHI Grant Number JP20K19884 and Inamori Research Grants.
\end{acknowledgements}

\section*{Data Availability Statement}
We claim that this work is a theoretical result and there is no available data source.

%
\section*{Conflict of interest}
The author declares that he has no conflict of interest.

\bibliographystyle{spmpsci}      
\bibliography{eleZDS}   

\begin{thebibliography}{10}
\providecommand{\url}[1]{{#1}}
\providecommand{\urlprefix}{URL }
\expandafter\ifx\csname urlstyle\endcsname\relax
  \providecommand{\doi}[1]{DOI~\discretionary{}{}{}#1}\else
  \providecommand{\doi}{DOI~\discretionary{}{}{}\begingroup
  \urlstyle{rm}\Url}\fi

\bibitem{AdaHin2013}
Adami, C., Hintze, A.: Evolutionary instability of zero-determinant strategies
  demonstrates that winning is not everything.
\newblock Nature Communications \textbf{4}(1), 1--8 (2013)

\bibitem{Aki2012}
Akin, E.: The iterated prisoner's dilemma: good strategies and their dynamics.
\newblock Ergodic Theory, Advances in Dynamical Systems pp. 77--107 (2016)

\bibitem{AxeHam1981}
Axelrod, R., Hamilton, W.D.: The evolution of cooperation.
\newblock Science \textbf{211}(4489), 1390--1396 (1981)

\bibitem{BecSch1993}
Beck, C., Sch{\"o}gl, F.: Thermodynamics of Chaotic Systems: an Introduction.
\newblock Cambridge University Press (1993)

\bibitem{DOS2012b}
Duersch, P., Oechssler, J., Schipper, B.C.: Unbeatable imitation.
\newblock Games and Economic Behavior \textbf{76}(1), 88--96 (2012)

\bibitem{Fri1971}
Friedman, J.W.: A non-cooperative equilibrium for supergames.
\newblock The Review of Economic Studies \textbf{38}(1), 1--12 (1971)

\bibitem{FudTir1991}
Fudenberg, D., Tirole, J.: Game Theory.
\newblock MIT Press, Massachusetts (1991)

\bibitem{GJLet2007}
Garrahan, J.P., Jack, R.L., Lecomte, V., Pitard, E., van Duijvendijk, K., van
  Wijland, F.: Dynamical first-order phase transition in kinetically
  constrained models of glasses.
\newblock Physical Review Letters \textbf{98}(19), 195702 (2007)

\bibitem{GKP2006}
Giardina, C., Kurchan, J., Peliti, L.: Direct evaluation of large-deviation
  functions.
\newblock Physical Review Letters \textbf{96}(12), 120603 (2006)

\bibitem{Guo2014}
Guo, J.L.: Zero-determinant strategies in iterated multi-strategy games.
\newblock arXiv preprint arXiv:1409.1786  (2014)

\bibitem{HRZ2015}
Hao, D., Rong, Z., Zhou, T.: Extortion under uncertainty: Zero-determinant
  strategies in noisy games.
\newblock Phys. Rev. E \textbf{91}, 052803 (2015)

\bibitem{HDND2016}
He, X., Dai, H., Ning, P., Dutta, R.: Zero-determinant strategies for
  multi-player multi-action iterated games.
\newblock IEEE Signal Processing Letters \textbf{23}(3), 311--315 (2016)

\bibitem{HMCN2017}
Hilbe, C., Martinez-Vaquero, L.A., Chatterjee, K., Nowak, M.A.: Memory-$n$
  strategies of direct reciprocity.
\newblock Proceedings of the National Academy of Sciences \textbf{114}(18),
  4715--4720 (2017)

\bibitem{HNS2013}
Hilbe, C., Nowak, M.A., Sigmund, K.: Evolution of extortion in iterated
  prisoner{\textquoteright}s dilemma games.
\newblock Proceedings of the National Academy of Sciences \textbf{110}(17),
  6913--6918 (2013)

\bibitem{HNT2013}
Hilbe, C., Nowak, M.A., Traulsen, A.: Adaptive dynamics of extortion and
  compliance.
\newblock PLOS ONE \textbf{8}(11), 1--9 (2013)

\bibitem{HRM2016}
Hilbe, C., R{\"o}hl, T., Milinski, M.: Extortion subdues human players but is
  finally punished in the prisoner's dilemma.
\newblock Nature Communications \textbf{5}, 3976 (2014)

\bibitem{HTS2015}
Hilbe, C., Traulsen, A., Sigmund, K.: Partners or rivals? strategies for the
  iterated prisoner's dilemma.
\newblock Games and Economic Behavior \textbf{92}, 41--52 (2015)

\bibitem{HWTN2014}
Hilbe, C., Wu, B., Traulsen, A., Nowak, M.A.: Cooperation and control in
  multiplayer social dilemmas.
\newblock Proceedings of the National Academy of Sciences \textbf{111}(46),
  16425--16430 (2014)

\bibitem{IchMas2018}
Ichinose, G., Masuda, N.: Zero-determinant strategies in finitely repeated
  games.
\newblock Journal of Theoretical Biology \textbf{438}, 61--77 (2018)

\bibitem{IFN2005}
Imhof, L.A., Fudenberg, D., Nowak, M.A.: Evolutionary cycles of cooperation and
  defection.
\newblock Proceedings of the National Academy of Sciences \textbf{102}(31),
  10797--10800 (2005)

\bibitem{JacSol2010}
Jack, R.L., Sollich, P.: Large deviations and ensembles of trajectories in
  stochastic models.
\newblock Progress of Theoretical Physics Supplement \textbf{184}, 304--317
  (2010)

\bibitem{KalSta1988}
Kalai, E., Stanford, W.: Finite rationality and interpersonal complexity in
  repeated games.
\newblock Econometrica: Journal of the Econometric Society pp. 397--410 (1988)

\bibitem{LAW2005}
Lecomte, V., Appert-Rolland, C., van Wijland, F.: Chaotic properties of systems
  with markov dynamics.
\newblock Physical Review Letters \textbf{95}(1), 010601 (2005)

\bibitem{LiKen2013}
Li, J., Kendall, G.: The effect of memory size on the evolutionary stability of
  strategies in iterated prisoner's dilemma.
\newblock IEEE Transactions on Evolutionary Computation \textbf{18}(6),
  819--826 (2013)

\bibitem{MamIch2019}
Mamiya, A., Ichinose, G.: Strategies that enforce linear payoff relationships
  under observation errors in repeated prisoner's dilemma game.
\newblock Journal of Theoretical Biology \textbf{477}, 63--76 (2019)

\bibitem{MamIch2020}
Mamiya, A., Ichinose, G.: Zero-determinant strategies under observation errors
  in repeated games.
\newblock Phys. Rev. E \textbf{102}, 032115 (2020)

\bibitem{McAHau2016}
McAvoy, A., Hauert, C.: Autocratic strategies for iterated games with arbitrary
  action spaces.
\newblock Proceedings of the National Academy of Sciences \textbf{113}(13),
  3573--3578 (2016)

\bibitem{McAHau2017}
McAvoy, A., Hauert, C.: Autocratic strategies for alternating games.
\newblock Theoretical Population Biology \textbf{113}, 13--22 (2017)

\bibitem{MurBae2018}
Murase, Y., Baek, S.K.: Seven rules to avoid the tragedy of the commons.
\newblock Journal of Theoretical Biology \textbf{449}, 94--102 (2018)

\bibitem{MurBae2020}
Murase, Y., Baek, S.K.: Five rules for friendly rivalry in direct reciprocity.
\newblock Scientific Reports \textbf{10}, 16904 (2020)

\bibitem{Ney1985}
Neyman, A.: Bounded complexity justifies cooperation in the finitely repeated
  prisoners' dilemma.
\newblock Economics Letters \textbf{19}(3), 227--229 (1985)

\bibitem{NeyOka1999}
Neyman, A., Okada, D.: Strategic entropy and complexity in repeated games.
\newblock Games and Economic Behavior \textbf{29}(1-2), 191--223 (1999)

\bibitem{NowSig1993}
Nowak, M., Sigmund, K.: A strategy of win-stay, lose-shift that outperforms
  tit-for-tat in the prisoner's dilemma game.
\newblock Nature \textbf{364}(6432), 56--58 (1993)

\bibitem{NowSig1992}
Nowak, M.A., Sigmund, K.: Tit for tat in heterogeneous populations.
\newblock Nature \textbf{355}(6357), 250--253 (1992)

\bibitem{NyaTou2017}
Nyawo, P.T., Touchette, H.: A minimal model of dynamical phase transition.
\newblock EPL (Europhysics Letters) \textbf{116}(5), 50009 (2017)

\bibitem{OsbRub1994}
Osborne, M.J., Rubinstein, A.: A Course in Game Theory.
\newblock MIT Press, Massachusetts (1994)

\bibitem{PHRT2015}
Pan, L., Hao, D., Rong, Z., Zhou, T.: Zero-determinant strategies in iterated
  public goods game.
\newblock Scientific reports \textbf{5}, 13096 (2015)

\bibitem{PreDys2012}
Press, W.H., Dyson, F.J.: Iterated prisoner{\textquoteright}s dilemma contains
  strategies that dominate any evolutionary opponent.
\newblock Proceedings of the National Academy of Sciences \textbf{109}(26),
  10409--10413 (2012)

\bibitem{RCO1965}
Rapoport, A., Chammah, A.M., Orwant, C.J.: Prisoner's dilemma: A study in
  conflict and cooperation, vol. 165.
\newblock University of Michigan press (1965)

\bibitem{Rub1986}
Rubinstein, A.: Finite automata play the repeated prisoner's dilemma.
\newblock Journal of Economic Theory \textbf{39}(1), 83--96 (1986)

\bibitem{Rub1998}
Rubinstein, A.: Modeling bounded rationality.
\newblock MIT Press, Massachusetts (1998)

\bibitem{StePlo2012}
Stewart, A.J., Plotkin, J.B.: Extortion and cooperation in the
  prisoner{\textquoteright}s dilemma.
\newblock Proceedings of the National Academy of Sciences \textbf{109}(26),
  10134--10135 (2012)

\bibitem{StePlo2013}
Stewart, A.J., Plotkin, J.B.: From extortion to generosity, evolution in the
  iterated prisoner{\textquoteright}s dilemma.
\newblock Proceedings of the National Academy of Sciences \textbf{110}(38),
  15348--15353 (2013)

\bibitem{SzoPer2014}
Szolnoki, A., Perc, M.: Evolution of extortion in structured populations.
\newblock Physical Review E \textbf{89}(2), 022804 (2014)

\bibitem{TraNow2006}
Traulsen, A., Nowak, M.A.: Evolution of cooperation by multilevel selection.
\newblock Proceedings of the National Academy of Sciences \textbf{103}(29),
  10952--10955 (2006)

\bibitem{Ued2021b}
Ueda, M.: Memory-two zero-determinant strategies in repeated games.
\newblock Royal Society Open Science \textbf{8}(5), 202186 (2021)

\bibitem{Ued2021}
Ueda, M.: Tit-for-tat strategy as a deformed zero-determinant strategy in
  repeated games.
\newblock Journal of the Physical Society of Japan \textbf{90}(2), 025002
  (2021)

\bibitem{Ued2022}
Ueda, M.: Unbeatable tit-for-tat as a zero-determinant strategy.
\newblock Journal of the Physical Society of Japan \textbf{91}(5), 054804
  (2022)

\bibitem{UedSas2015}
Ueda, M., Sasa, S.i.: Replica symmetry breaking in trajectories of a driven
  brownian particle.
\newblock Physical Review Letters \textbf{115}(8), 080605 (2015)

\bibitem{UedTan2020}
Ueda, M., Tanaka, T.: Linear algebraic structure of zero-determinant strategies
  in repeated games.
\newblock PLOS ONE \textbf{15}(4), e0230973 (2020)

\bibitem{UsuUed2021}
Usui, Y., Ueda, M.: Symmetric equilibrium of multi-agent reinforcement learning
  in repeated prisoner's dilemma.
\newblock Applied Mathematics and Computation \textbf{409}, 126370 (2021)

\bibitem{WZLZX2016}
Wang, Z., Zhou, Y., Lien, J.W., Zheng, J., Xu, B.: Extortion can outperform
  generosity in the iterated prisoner's dilemma.
\newblock Nature Communications \textbf{7}, 11125 (2016)

\bibitem{YBC2017}
Yi, S.D., Baek, S.K., Choi, J.K.: Combination with anti-tit-for-tat remedies
  problems of tit-for-tat.
\newblock Journal of Theoretical Biology \textbf{412}, 1--7 (2017)

\bibitem{You2017}
Young, R.D.: Press-dyson analysis of asynchronous, sequential prisoner's
  dilemma.
\newblock arXiv preprint arXiv:1712.05048  (2017)

\end{thebibliography}


\begin{thebibliography}{}
%
%
\bibitem{RefJ}
Author, Article title, Journal, Volume, page numbers (year)
\bibitem{RefB}
Author, Book title, page numbers. Publisher, place (year)
\end{thebibliography}

\if0

\fi

\end{document}